\newcommand{\Ind}{\operatorname{Ind}}
\newcommand{\sgn}{\operatorname{sgn}}
\newcommand{\Ker}{\operatorname{Ker}}
\newcommand{\im}{\operatorname{Im}}
\newcommand{\tr}{\operatorname{tr}}
\newcommand{\id}{\operatorname{id}}
\begin{document}
\newtheorem{theorem}{Theorem}[section]
\newtheorem{lemma}[theorem]{Lemma}
\newtheorem{definition}[theorem]{Definition}
\newtheorem{claim}[theorem]{Claim}
\newtheorem{example}[theorem]{Example}
\newtheorem{remark}[theorem]{Remark}
\newtheorem{proposition}[theorem]{Proposition}
\newtheorem{corollary}[theorem]{Corollary}
\newtheorem{observation}[theorem]{Observation}

\author{Alexander Lubotzky}
\address{Einstein Institute of Mathematics, The Hebrew University
of Jerusalem, 91904, Jerusalem, Israel}
\email{alex.lubotzky@mail.huji.ac.il}

\author{Izhar Oppenheim}
\address{Department of Mathematics, Ben-Gurion University of the Negev, Be'er Sheva 84105, Israel} 
\email{izharo@bgu.ac.il}


\title{Non $p$-norm approximated Groups}
\maketitle
\begin{abstract}
It was shown in a previous work of the first named author with De Chiffre, Glebsky and Thom that there exists a finitely presented group which cannot be approximated by almost-homomorphisms to the unitary groups $U(n)$ equipped with the Frobenius norms (a.k.a as $L^2$ norm, or the Schatten-2-norm). In his ICM18 lecture, Andreas Thom asks if this result can be extended to general Schatten-p-norms. We show that this is indeed the case for $1<p< \infty$. 
\end{abstract}

\section{Introduction}

Let $U(n)$ be the group of unitary $n \times n$ matrices equipped with a bi-invariant metric $d_n$ induced by a Banach norm $\Vert . \Vert$ on $M_n (\mathbb{C})$, as $d_n (g,h) = \Vert g - h \Vert$. Examples of special interest are:
\begin{enumerate}[label=({{\arabic*}})]
\item The Hilbert-Schmidt norm: $\Vert T \Vert_{H.S.} = \sqrt{\frac{1}{n} \tr (T^* T)}$.
\item For $1 \leq p < \infty$, the Schatten $p$-norm: $\Vert T \Vert_p =  \left( \tr \vert T \vert^p \right)^{\frac{1}{p}}$, where $\vert T \vert = \sqrt{T^* T}$. When $p=2$, this is usually called the Frobenius norm: 
$$\Vert T \Vert_2 = \Vert T \Vert_{Frob} = \sqrt{n} \Vert T \Vert_{H.S.}.$$
\item The operator norm, $\Vert T \Vert_{op} = \max \lbrace \Vert Tv \Vert : \Vert v \Vert=1 \rbrace$ also known as the Schatten $\infty$-norm.
\end{enumerate}

Whatever $\lbrace d_n \rbrace_{n=1}^\infty$ are, define for $\mathcal{G} = (U(n), d_n)$ the following:
\begin{definition}
A finitely presented group $\Gamma$ is called $\mathcal{G}$-approximated if there exists an infinite sequence $\lbrace n_k \rbrace_{k=1}^\infty$ of integers and (set-theoretic) maps $\phi = (\phi_{n_k})$, $\phi_{n_k} : \Gamma \rightarrow U(n_k)$ such that:
\begin{enumerate}
\item $\forall g, h \in \Gamma$, $\lim d_{n_k} (\phi_{n_k} (gh), \phi_{n_k} (g)\phi_{n_k} (h)) = 0$.
\item $\forall g \in \Gamma, g \neq 1$, there is $\varepsilon (g) = \varepsilon >0$ such that $\limsup d_{n_k} (\phi_{n_k} (g), \id_{U(n_k)}) \geq \varepsilon$, where $\id_{U(n_k)}$ is the $n_k \times n_k$ identity matrix.  
\end{enumerate}
\end{definition}

There are two long standing questions regarding whether there exist groups $\Gamma$ which are \underline{not} $(U(n), d_n)$-approximated with respect to the $d_n$'s defined in cases $(1)$ and $(3)$. The question for case $(1)$ where $d_n$ is defined by the Hilbert-Schmidt norm, is equivalent to Alain Connes' problem whether every group is Connes-embeddable (see \cite{Connes} and \cite{Pestov}  for details), while case $(3)$ is related to Kirchberg's question whether any stably finite $C^*$-algebra is embeddable into an norm-ultraproduct of matrix algebras (see \cite{BlackK} for details), which implies that any group is $(U(n), d_n)$-approximated with respect to the distance induced by the operator norm. 

In this paper, a group $\Gamma$ will be called $p$-norm approximated if it is approximated with respect to $\mathcal{G} = (U(n), \Vert . \Vert_p)$.

A recent breakthrough \cite{DGLT} shows that there exist groups that are not Frobenius approximated (i.e.,  groups that are not $2$-norm approximated). Following this, Andreas Thom asks in his ICM 2018 talk \cite{Thom}, if that result can be extended to all Schatten $p$-norms. We answer this affirmatively in the case where $1 < p < \infty$, and in fact we prove a somewhat stronger result:
\begin{theorem}
\label{main thm}
There exists a finitely presented group $\Lambda$ which is not $p$-norm approximated for any $1 < p < \infty$. 
\end{theorem}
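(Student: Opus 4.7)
The plan is to extend the strategy of \cite{DGLT}, which handled the case $p=2$, to the full range $1<p<\infty$. I take as $\Lambda$ the same finitely presented group constructed in \cite{DGLT}: it fits into a non-split central extension
\[
1 \longrightarrow \mathbb{Z} \longrightarrow \Lambda \longrightarrow \Gamma \longrightarrow 1
\]
whose class $[\alpha]\in H^{2}(\Gamma,\mathbb{Z})$ has infinite order but becomes trivial in $H^{2}(\Gamma,\mathbb{R})$. The base $\Gamma$ is a cocompact higher-rank arithmetic lattice chosen so as to enjoy a suitable Banach-space strengthening of Kazhdan's property (T).

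Arguing by contradiction, suppose $\phi_{k}\colon\Lambda\to U(n_{k})$ is a $p$-norm asymptotic homomorphism satisfying condition (2) of the definition. The argument splits into two parts. \emph{Part I}, which is the new technical ingredient, is a \textbf{$p$-norm stability} statement for $\Gamma$: every sequence of $p$-norm asymptotic homomorphisms $\rho_{k}\colon\Gamma\to U(m_{k})$ is $p$-close to a sequence of genuine homomorphisms $\psi_{k}\colon\Gamma\to U(m_{k})$. Once this is in place, \emph{Part II} is the cohomological obstruction step of \cite{DGLT} carried out in Schatten-$p$ norm: correcting $\phi_{k}|_{\Gamma}$ to an honest homomorphism forces the image $\phi_{k}(z)$ of a central generator $z$ of $\mathbb{Z}\subset\Lambda$ to be $p$-close to a scalar $\lambda_{k}\cdot\id$, and the $U(1)$-valued cocycle $(g,h)\mapsto\lambda_{k}^{\alpha(g,h)}$ must be a coboundary because $[\alpha]$ dies in $H^{2}(\Gamma,\mathbb{R})$; combined with the infinite order of $[\alpha]$ in $H^{2}(\Gamma,\mathbb{Z})$ this pushes $\lambda_{k}\to 1$, contradicting condition (2) for $z$.

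The proof of Part I is where essentially all of the new content lies. Writing the defect multiplicatively as $\rho_{k}(g)\rho_{k}(h)=\exp(i\beta_{k}(g,h))\rho_{k}(gh)$, one extracts an approximate $2$-cocycle $\beta_{k}\in C^{2}(\Gamma,\mathfrak{u}(m_{k}))$ whose size is controlled in the Schatten-$p$ norm on $\mathfrak{u}(m_{k})$. The resulting $\Gamma$-module is an isometric representation on a uniformly convex and uniformly smooth Banach space (the noncommutative $L^{p}$ space $S^{p}_{m_{k}}$). The aim is to trivialize $\beta_{k}=\delta\gamma_{k}+(\text{smaller error})$ with $\gamma_{k}\colon\Gamma\to\mathfrak{u}(m_{k})$ of controlled Schatten-$p$ size; then $\psi_{k}(g):=\exp(-i\gamma_{k}(g))\rho_{k}(g)$ is a genuine homomorphism $p$-close to $\rho_{k}$. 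For this I invoke a quantitative vanishing of $H^{2}(\Gamma,V)$ for isometric $\Gamma$-modules $V$ in a class of uniformly convex Banach spaces containing all $S^{p}_{m}$, in the spirit of the Banach-space strengthenings of property (T) developed by Bader-Furman-Gelander-Monod and refined for higher-rank lattices in subsequent work.

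The principal obstacle will be establishing this quantitative $L^{p}$-cohomology vanishing with constants uniform in the dimension $m_{k}$, and with a dependence on $p$ that degenerates only at the endpoints $p\in\{1,\infty\}$ --- which is precisely why the theorem is stated for the open range $1<p<\infty$. Everything else (construction of $\Lambda$, extraction of the approximate cocycle, the exponential correction step, and the final obstruction via the scalars $\lambda_{k}$) is a direct adaptation of \cite{DGLT} from the Hilbert setting to the $p$-norm setting, with uniform convexity of $S^{p}$ playing the role of the Hilbert-Schmidt inner product.
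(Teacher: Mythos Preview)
Your proposal has a genuine gap at exactly the point you yourself flag as the ``principal obstacle''. The Banach-space Garland/Dymara--Januszkiewicz machinery you invoke (Theorem~\ref{Vanishing of cohomoloogy thm} in the paper, and more generally the results of \cite{OppVanBan}) does give $H^{2}(\Gamma_0,V)=0$ for $V$ an ultraproduct of $(M_{n_k}(\mathbb{C}),\Vert\cdot\Vert_p)$, but only when the residue-field order $q$ exceeds a threshold $Q=Q(p_1,p_2,d)$ that \emph{blows up as $p_1\to 1$ or $p_2\to\infty$}. Consequently, a fixed lattice $\Gamma_0$ --- in particular, ``the same finitely presented group constructed in \cite{DGLT}'' --- is $p$-norm stable only for $p$ in some bounded closed subinterval of $(1,\infty)$ determined by its building's thickness. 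Your claim that the dependence on $p$ ``degenerates only at the endpoints'' is true, but this is precisely why no single lattice-type example can cover the whole open interval by this method.

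The paper closes this gap with an extra step you omit: for each $1<p_1\le 2\le p_2<\infty$ one constructs a (Deligne-type, non-residually-finite, finite central) extension $\widetilde{\Gamma}=\Gamma_{p_1,p_2}$ adapted to that range, and then takes as $\Lambda$ a Higman universal finitely presented group containing every finitely presented group. Since $\mathcal{G}$-approximation passes to subgroups, $\Lambda$ inherits non-approximability from each $\Gamma_{p_1,p_2}$, and letting $p_1\to 1$, $p_2\to\infty$ covers all of $(1,\infty)$. Without this (or some substitute yielding genuine uniformity in $p$), your argument only recovers non-approximability of the \cite{DGLT} group for $p$ in a proper subinterval.

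A secondary remark: your Part~II, tracking the central scalar $\lambda_k$ by hand, is more laborious than necessary. The paper instead shows $H^{2}(\widetilde{\Gamma},V)=0$ for the extension $\widetilde{\Gamma}$ itself (via a spectral sequence from the vanishing for $\Gamma_0$), deduces $p$-norm stability of $\widetilde{\Gamma}$ directly from the \cite{DGLT} criterion, and then uses the general fact that a stable, approximated, finitely presented group must be residually finite --- which $\widetilde{\Gamma}$ is not. This avoids the explicit $U(1)$-cocycle bookkeeping entirely.
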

The case of $p=1$ is left open, as well as the cases of the Hilbert-Schmidt and the operator norms.

The method of proof follows the one implemented in \cite{DGLT} for $p=2$, but some further cohomology vanishing results are needed. 

Let $\Gamma$ be a finitely presented group $\Gamma = \langle S \vert R \rangle$, with $R \subseteq \mathbb{F}_S$ - the free group on $S$ and $\vert R \vert < \infty$. Any map $\phi : S \rightarrow U(n)$ uniquely determines a homomorphism $\phi :\mathbb{F}_S \rightarrow U(n)$ which we will also denote by $\phi$. 

The group $\Gamma$  is called $\mathcal{G}=(U(n), d_n)$-stable if for every $\varepsilon >0$ there exists $\delta >0$ such that for every $n \in \mathbb{N}$, if $\phi : S \rightarrow U(n)$ is a map with 
$$\sum_{r \in R} d_n (\phi (r), \id_{U(n)} ) < \delta,$$ 
then there exists a homomorphism $\tilde{\phi}: \Gamma \rightarrow U(n)$ (or equivalently, a map $\tilde{\phi}  : S \rightarrow U(n)$ with $\sum_{r \in R} d_n (\tilde{\phi}  (r), \id_{U(n)} ) = 0$) with 
$$\sum_{s \in S} d_n (\phi (s), \tilde{\phi}  (s)) < \varepsilon.$$

Below, we will call a group $\Gamma$ \textit{$p$-norm stable} if it is stable with respect to  $\mathcal{G} = (U(n), \Vert . \Vert_p)$.

A well-known observation (see for instance \cite{GR}, \cite{AP} and \cite{DGLT}) is that a $\mathcal{G}$-approximated $\mathcal{G}$-stable finitely presented group must be residually finite. Thus a non-residually-finite finitely presented group which is $\mathcal{G}$-stable cannot be $\mathcal{G}$-approximated.

In \cite{DGLT}, a general sufficient criterion for Frobenius stability was given: If $H^2 (\Gamma,V) =0$ for every unitary representation of $\Gamma$ on any Hilbert space $V$, then $\Gamma$ is Frobenius stable. This was combined then with Garland's method \cite{Gar} (as extended by Ballmann and \' Swiatkowski \cite{BS} for general Hilbert spaces) to produce some lattices $\Gamma_0$ in some simple $l$-adic Lie groups satisfying the desired $H^2$ vanishing for every Hilbert space. Then a $l$-adic analogue of a result by Deligne \cite{Del} was implemented in order to produce some finite central extensions $\widetilde{\Gamma}$ of $\Gamma_0$ that are not residually finite. These $\widetilde{\Gamma}$ are the non Frobenius approximated groups.

The proof in \cite{DGLT} actually shows more (see Theorem 5.1 and Remark 5.2 there): If $\Vert . \Vert$ is any \underline{unitarily invariant} and \underline{submultiplicative} norm on $M_n (\mathbb{C})$ (and so is the Schatten $p$-norm for every $1 \leq p \leq \infty$) and if $H^2 (\Gamma, V)=0$ for any $\Gamma$ isometric representation on a Banach space of the form $V = \prod_{k \rightarrow \mathcal{U}} (M_{n_k} (\mathbb{C}), \Vert . \Vert)$, where $V$ is the Banach ultraproduct of $M_{n_k} (\mathbb{C})$ with respect to the norm $\Vert . \Vert$ and with respect to any ultrafilter $\mathcal{U}$ (see \cite{DGLT} and \cref{Preliminaries sec} below for more) then $\Gamma$ is $\mathcal{G}$-stable. To get non $p$-norm approximated groups we need an $H^2$-vanishing result which will work for spaces of the form $V = \prod_{k \rightarrow \mathcal{U}} (M_{n_k} (\mathbb{C}), \Vert . \Vert_p)$, where $\Vert . \Vert_p$ is the Schatten $p$-norm. 

The technology to extend Garland's method (or more precisly the method of Dymara and Januszkiewicz \cite{DymaraJ}) to a wide class of Banach spaces was developed by the second named author in \cite{OppVanBan}. More precisely, it is shown there that for certain classes of Banach spaces, vanishing of cohomology can be deduced for a $l$-adic Lie group $G$, given a large enough thickness of the affine building on which it acts. Using (a suitable version of) Shapiro's Lemma, these vanishing results pass to cocompact lattices of $G$. In our context, these methods yield the following theorem: 
\begin{theorem}
\label{Vanishing of cohomoloogy thm}
Let $G= \mathbb{G}(K)$, where $\mathbb{G}$ is a simple $K$-algebraic group of $K$-rank $d$ over a non-archimedean local field $K$ with residue field of order $q$ and $\Gamma_0 < G$ a cocompact lattice of $G$. For any $1 < p_1 \leq 2 \leq p_2 < \infty$, there exists a natural number $Q=Q(p_1,p_2,d)$ such that if $q > Q$, then $H^i  (\Gamma_0, V) =0$ for every $i=1,...,d-1$ and every Banach space of the form $V = \prod_{k \rightarrow \mathcal{U}} (M_{n_k} (\mathbb{C}), \Vert . \Vert_p)$ where $\mathcal{U}$ is any ultrafilter on $\mathbb{N}$ and $p_1 \leq p \leq p_2$.
\end{theorem}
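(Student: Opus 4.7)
The plan is to follow the Shapiro-then-Garland strategy already used in \cite{DGLT}: first reduce the vanishing of $H^i(\Gamma_0, V)$ to a continuous cohomology vanishing for $G$ acting on its Bruhat--Tits building, and then invoke the Banach-coefficient extension of Garland's and Dymara--Januszkiewicz' methods developed in \cite{OppVanBan}. The quantitative dependence of the thickness threshold on the Banach-space invariants will then yield the uniform constant $Q = Q(p_1,p_2,d)$.

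Concretely, since $\Gamma_0$ is cocompact in $G$, a Shapiro-type lemma identifies $H^i(\Gamma_0, V)$ with a continuous cohomology $H^i_{\mathrm{ct}}(G, W)$, where $W$ is the induced Banach $G$-module of continuous $\Gamma_0$-equivariant $V$-valued functions on $G$. Because $\Gamma_0 \backslash G$ is compact, $W$ inherits the geometric invariants of $V$ (reflexivity, type and cotype, moduli of smoothness and convexity) with controlled constants. The group $G$ acts properly and cocompactly on its Bruhat--Tits building $X$, a $d$-dimensional simplicial complex whose local structure is governed by spherical residues of thickness $q+1$. The continuous cohomology $H^i_{\mathrm{ct}}(G, W)$ is computed by the $G$-equivariant $W$-valued cochain complex on $X$, and the main theorem of \cite{OppVanBan} forces $H^i_{\mathrm{ct}}(G, W) = 0$ for $1 \leq i \leq d-1$ as soon as the spectral gap on all relevant links (which tends to $1$ as $q \to \infty$) surpasses a threshold determined by the Banach-space invariants of $W$.

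It then remains to exhibit a single threshold $Q$ valid uniformly for all $p \in [p_1,p_2]$ and all ultrafilters $\mathcal{U}$. The key observation is that the quantitative Banach-space invariants of the Schatten $p$-classes (type, cotype, moduli of uniform convexity and smoothness, and the nonlinear spectral-gap constants in which they enter in \cite{OppVanBan}) are uniformly bounded for $p$ in a compact subinterval of $(1,\infty)$, and they are preserved under Banach ultraproducts. Thus the resulting threshold depends only on $p_1$, $p_2$, and $d$.

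The principal technical obstacle is precisely this tracking of constants. One must verify that the Banach-space hypothesis used in \cite{OppVanBan}---which interpolates between the Hilbertian case $p=2$, where Garland's classical argument applies, and the general Banach setting---is stable under ultraproducts with estimates uniform on $[p_1,p_2]$, and then translate the abstract spectral-gap condition into an explicit arithmetic lower bound on the residue field size $q$ that depends only on $p_1,p_2,d$. Once the appropriate interpolation inequality for Schatten classes is in place, the rest is a bookkeeping exercise.
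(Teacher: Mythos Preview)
Your high-level strategy---induce from $\Gamma_0$ to $G$ via Shapiro's Lemma, then apply the Banach Garland machinery of \cite{OppVanBan}---is exactly the paper's route. The gap is in the induction step.

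You take $W$ to be the space of \emph{continuous} $\Gamma_0$-equivariant maps $G\to V$, i.e.\ the sup-norm induction, and then assert that $W$ ``inherits the geometric invariants of $V$ (reflexivity, type and cotype, moduli of smoothness and convexity).'' This is false. For any non-scattered compact $K$ the space $C(K)$ contains isometric copies of $\ell^\infty_n$ for every $n$; hence $C(G/\Gamma_0;V)$ is never uniformly convex, has only trivial type, and is not even Asplund. It therefore lies outside every class of Banach spaces to which the vanishing theorem of \cite{OppVanBan} applies (that theorem also needs continuity of the dual representation, guaranteed by the Asplund property), and your appeal to it collapses.

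The paper's fix---and in fact its main technical content---is to replace continuous induction by the $L^2$-induction $\Ind_{\Gamma_0}^G(\pi)_{L^2}\cong L^2(G/\Gamma_0;V)$. Two things must then be supplied. First, one must show that $L^2(\Omega;V)$ remains in the relevant class: the paper does this operator-theoretically (Lemma~\ref{L2 norm stability}), proving that a bound $\Vert T\otimes\id_V\Vert\le C$ propagates to $\Vert T\otimes\id_{L^2(\Omega;V)}\Vert\le C$, and then reruns the \cite{OppVanBan} argument with the enlarged closure $\widetilde{\mathcal{E}}$ (Theorem~\ref{extended main thm of VanBan}); Asplundness of $L^2(\Omega;V)$ comes from Sundaresan's theorem. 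Second, the Shapiro Lemma in Borel--Wallach is stated only for the continuous induction, so the paper proves the $L^2$ version from scratch via the building (Theorem~\ref{L^2 Shapiro's lemma}). These two ingredients are precisely what your sketch is missing; once they are in place, the uniform dependence on $p_1,p_2,d$ that you describe does go through.
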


Most of the paper will be devoted to the proof of Theorem \ref{Vanishing of cohomoloogy thm}. Let us now show how it implies Theorem \ref{main thm}. 

Applying Deligne's method as in \cite{DGLT}, we get a non-residually finite, finite central extension $\widetilde{\Gamma} = \Gamma_{p_1,p_2}$ of such a cocompact lattice $\Gamma_0$ in a suitable $l$-adic Lie group $G$. Assuming that the dimension of the affine building associated to $G$ is greater or equal to $3$, a standard spectral sequence argument yields that $H^2  (\widetilde{\Gamma}, V) =0$ for any $V$ as in Theorem \ref{Vanishing of cohomoloogy thm}. Therefore $\widetilde{\Gamma}$ is $p$-norm stable for any $p_1 \leq p \leq p_2$ and since it is not residually finite, we deduce by the observation stated above that  $\widetilde{\Gamma}$ is \textbf{not} $p$-norm approximated for any $p_1 \leq p \leq p_2$.

Recalling now Higman's Theorem (see \cite[Theorem 7.3, page 215]{LSBook}) which asserts that there exists a finitely presented group $\Lambda$ that contains all finitely presented groups. By taking $p_1 \rightarrow 1$ and $p_2 \rightarrow \infty$ and noting that if a group is $\mathcal{G}$-approximated, so is every subgroup of it, we deduce that such $\Lambda$ is not $p$-norm approximated for any $1 < p < \infty$ and Theorem \ref{main thm} is proved. 

As mentioned above, the cases of $p=1$ and $p= \infty$ are left open. In both cases (unlike the Hilbert-Schmidt norm) the norms are submultiplicative  (see \cite{DGLT} for an explanation of the importance of this property), but at least for $p=\infty$ (which is the case of the operator norm), the method of this paper cannot work: the method applied below shows vanishing of $H^i  (\Gamma_0, V) =0$ for every $i=1,...,d-1$, based on the geometric properties of $V$. Therefore, if the vanishing of cohomology is proved for $p=\infty$, it will be proven for every $\ell^\infty$ Banach space, but it is known that for every discrete group $\Gamma$, $H^1 (\Gamma, \ell^\infty (\Gamma)) \neq 0$ (see for instance \cite[Section 4]{DGLT}). We note that this type of reasoning excluding $p=\infty$ does not hold in the case of $p=1$: in \cite{BGM}, Bader, Gelander and Monod showed that for every group $\Gamma$ with property (T), $H^1 (\Gamma, L^1 (\Omega)) =0$ for every measure space $\Omega$. The methods of \cite{BGM} are very different from those applied in this paper (and in \cite{OppAvProj}), but one can ask it those methods can be extended to show the vanishing of the second cohomology for the case $p=1$. 

The rest of this paper is devoted to the proof of Theorem \ref{Vanishing of cohomoloogy thm}. As noted above, in \cite{OppVanBan}, the second named author proved a similar vanishing of cohomology, but for a $l$-adic Lie group $G$ instead of a lattice. Below, we will show how to use the results of \cite{OppVanBan} together with a version of Shapiro's Lemma to deduce Theorem \ref{Vanishing of cohomoloogy thm}. The paper \cite{OppVanBan} was not written with this application in mind and therefore in order to adapt the results of \cite{OppVanBan} to our setting, a somewhat lengthy exposition regarding the general theory of Banach spaces and group representations on them is needed. 

This paper is organized as follows: In \cref{Preliminaries sec}, we give a number of definitions and results needed to state the results regarding group cohomology with Banach coefficients. In \cref{Vanishing of Cohomology sec}, we deduce Theorem \ref{Vanishing of cohomoloogy thm} from the results of \cite{OppVanBan} that apply to $G$, using Shapiro's Lemma that relates the cohomology of $G$ to that of $\Gamma_0$. Unfortunately, it seems that the version of Shapiro's Lemma we need (for Banach spaces rather than Hilbert spaces) is not proved in the literature and therefore we will provide a proof in \cref{Shapiro's lemma sec}.

\paragraph{\textbf{Acknowledgments.}} The first named author was supported in part by the ERC and the NSF. The second named author was supported in part by the ISF. This work was done while the authors were visiting the IIAS (Israeli Institute of Advanced Studies) whose great hospitality is warmly acknowledged.

\section{Preliminaries}
\label{Preliminaries sec}

\subsection{Strictly $\theta$-Hilbertian spaces and Schatten norms}

Two Banach spaces $V_0, V_1$ form a \textit{compatible pair} $(V_0,V_1)$ if they are continuously linearly embedded in the same topological vector space. The idea of complex interpolation is that given a compatible pair $(V_0,V_1)$ and a constant $0 \leq \theta \leq 1$, there is a method to produce a new Banach space $[V_0, V_1]_\theta$ as a ``combination'' of $V_0$ and $V_1$. We will not review this method here, and the interested reader can find more information on interpolation in \cite{InterpolationSpaces}.

This brings us to consider the following definition due to Pisier \cite{Pisier}: a Banach space $V$ is called \textit{strictly $\theta$-Hilbertian} for $0 < \theta \leq 1$, if there is a compatible pair $(V_0,V_1)$, with $V_1$ a Hilbert space, such that $V = [V_0, V_1]_\theta$. Examples of strictly $\theta$-Hilbertian spaces are $L^p$ space and non-commutative $L^p$ spaces (see \cite{PX} for definitions and properties of non-commutative $L^p$ spaces), where in these cases $\theta = \frac{2}{p}$ if $2 \leq  p  < \infty $ and $\theta = 2 - \frac{2}{p}$ if $1 < p \leq 2$. We are interested in a very basic case of non-commutative $L^p$ spaces - namely finite matrices with $p$-Schatten norms:
\begin{definition}[Schatten norm for matrices]
Let $d \in \mathbb{N}$ and let $M_d (\mathbb{C})$ be the space of $d \times d$ complex matrices. For $A \in M_d (\mathbb{C})$, recall that $A^* A$ is always a positive semidefinite matrix and denote $\vert A \vert = \sqrt{A^* A}$. For $1 \leq p < \infty$, define the Schatten $p$-norm on $M_d (\mathbb{C})$ by $\Vert A \Vert_p = \left( \tr(\vert A \vert^p) \right)^{\frac{1}{p}}.$
\end{definition}

\subsection{Vector valued $L^2$ spaces} 
\label{Vector valued spaces section}
Given a measure space $\Omega$ with a finite measure $\mu$ (a.k.a a finite measure space) and Banach space $V$, a function $s : \Omega \rightarrow V$ is called simple if it is of the form:
$$s(\omega) = \sum_{i=1}^n \chi_{E_i} (\omega) v_i,$$
where $\lbrace E_1,...,E_n \rbrace$ is a partition of $\Omega$ where each $E_i$ is a measurable set, $\chi_{E_i}$ is the indicator function on $E_i$ and $v_i \in V$. 

A function $f : \Omega \rightarrow V$ is called \textit{Bochner measurable} if it is almost everywhere the limit of simple functions, i.e., if there is a sequence of simple functions $s_n : \Omega \rightarrow V$ such that for almost every $\omega$, $f(\omega) = \lim_{n} s_n (\omega)$. Denote $L^2 (\Omega ; V)$ to be the space of Bochner measurable functions satisfying:
$$\Vert f \Vert_{L^2 (\Omega ; V)} = \left( \int_\Omega \Vert f (\omega) \Vert^2_V d \mu (\omega) \right)^{\frac{1}{2}} < \infty.$$ 

Given a bounded linear operator $T \in B(L^2 (\Omega, \mu))$, we can define a bounded linear operator $T \otimes id_V \in B(L^2 (\Omega ; V))$ by defining it first on simple functions and extending it to the whole space $L^2 (\Omega ; V)$.

We will also be interested in how $T \otimes id_V$ behaves under some operations - this is summed up in the following lemma:
\begin{lemma}
\label{L2 norm stability}
Let $(\Omega, \mu)$ be a measure space with a finite measure, $T$ a bounded operator on $L^2 (\Omega, \mu)$ and $C >0$ a constant. Let $\mathcal{B} = \mathcal{B} (C)$ be the class of Banach spaces defined as:
$$\mathcal{B} = \lbrace V : \Vert T \otimes id_V \Vert_{B(L^2 (\Omega ; V))} \leq C \rbrace.$$
Then this class is closed under quotients, subspaces, $l_2$-sums, and ultraproducts of Banach spaces, i.e., performing any of these operations on Banach spaces in $\mathcal{B}$ yields a Banach space in $\mathcal{B}$. Also, for any finite measure space $(\Lambda, \nu)$ and every $V \in \mathcal{B}$, we have that $L^2 (\Lambda ; V) \in \mathcal{B}$.
\end{lemma}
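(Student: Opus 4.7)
The plan is to handle each of the five closure properties separately, reducing every statement to a bound that can be checked on simple functions and then extended by density of simple functions in $L^2(\Omega;V)$. Throughout, the point is that $T \otimes \id_V$ is a priori defined on simple functions by $(T\otimes\id_V)(\sum_i \chi_{E_i} v_i) = \sum_i (T\chi_{E_i}) v_i$, and the hypothesis $V\in\mathcal{B}$ controls the operator norm of its extension.

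For subspaces, if $W \hookrightarrow V$ isometrically then $L^2(\Omega;W) \hookrightarrow L^2(\Omega;V)$ isometrically, and $T \otimes \id_V$ restricts to $T\otimes \id_W$ on simple $W$-valued functions, giving the bound at once. For quotients $q : V \twoheadrightarrow V/W$, the key observation is the intertwining identity $q \circ (T\otimes\id_V) = (T\otimes\id_{V/W}) \circ q$ on simple functions. Since every simple function $s : \Omega \to V/W$ lifts to a simple function $\tilde s : \Omega \to V$ with $\|\tilde s(\omega)\|_V \leq (1+\varepsilon)\|s(\omega)\|_{V/W}$ pointwise, one gets $\|(T\otimes\id_{V/W})s\| \leq \|(T\otimes\id_V)\tilde s\| \leq C(1+\varepsilon)\|s\|$, and then $\varepsilon\to 0$ and density finish the argument. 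For $\ell_2$-sums $V = \bigoplus_2 V_i$, the canonical isometry $L^2(\Omega;V) \cong \bigoplus_2 L^2(\Omega;V_i)$ diagonalises $T\otimes\id_V$ as $\bigoplus_2 (T\otimes\id_{V_i})$, so the bound is immediate.

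The case of ultraproducts $V = \prod_{i \to \mathcal{U}} V_i$ is where the most care is required and is the main obstacle. Given a simple function $s = \sum_{j=1}^{N}\chi_{E_j} v_j$ with $v_j = [v_{j,i}]_{i\to\mathcal{U}} \in V$, set $s^i = \sum_{j=1}^{N}\chi_{E_j} v_{j,i} \in L^2(\Omega;V_i)$. Because the partition $\{E_j\}$ is finite and the Banach space ultraproduct norm is defined by $\|[w_i]\| = \lim_{i\to\mathcal{U}}\|w_i\|$, a direct computation shows both
\[
\|s\|_{L^2(\Omega;V)}^2 = \lim_{i\to\mathcal{U}} \|s^i\|_{L^2(\Omega;V_i)}^2,
\qquad
\|(T\otimes\id_V)s\|_{L^2(\Omega;V)}^2 = \lim_{i\to\mathcal{U}} \|(T\otimes\id_{V_i})s^i\|_{L^2(\Omega;V_i)}^2,
\]
using that $(T\otimes\id_V)s = \sum_j (T\chi_{E_j})\,v_j$ takes values in a finite-dimensional subspace of $V$ spanned by ultraproduct classes, whose norms agree with the ultraproduct of pointwise norms. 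Applying the bound $V_i \in \mathcal{B}$ term by term and taking the limit along $\mathcal{U}$ gives $\|(T\otimes\id_V)s\| \leq C\|s\|$ on simple functions, and Bochner density transfers this to all of $L^2(\Omega;V)$.

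Finally, for $W := L^2(\Lambda;V)$ with $V\in\mathcal{B}$, the Fubini-type isometric identification $L^2(\Omega;L^2(\Lambda;V)) \cong L^2(\Omega\times\Lambda;V) \cong L^2(\Lambda;L^2(\Omega;V))$ (which holds because the exponent is $2$ and one can verify it first on finite tensors of simple functions) sends $T\otimes\id_W$ to the operator that, for $\nu$-a.e.\ $\lambda\in\Lambda$, acts on the slice $f(\cdot,\lambda) \in L^2(\Omega;V)$ as $T\otimes\id_V$. Integrating the pointwise bound $\|(T\otimes\id_V)f(\cdot,\lambda)\|_{L^2(\Omega;V)}^2 \leq C^2\|f(\cdot,\lambda)\|_{L^2(\Omega;V)}^2$ against $\nu$ yields $\|T\otimes\id_W\| \leq C$, completing the proof.
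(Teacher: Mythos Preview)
Your argument is correct and proceeds by the same mechanism as the paper's: reduce everything to simple functions and compute. The paper cites \cite[Lemma~3.1]{Salle} for closure under quotients, subspaces and ultraproducts and leaves $\ell_2$-sums to the reader, writing out only the $L^2(\Lambda;V)$ case; you instead supply self-contained arguments for all five closures. For the $L^2(\Lambda;V)$ part, your Fubini/slicing description and the paper's explicit double-simple-function computation are the same calculation in two notations: the paper fixes a measurable partition $\{F_j\}$ of $\Lambda$, freezes $j$, and applies the bound for $T\otimes\id_V$ to $\sum_i \chi_{E_i} v_{i,j}$, which is exactly your ``apply $T\otimes\id_V$ on the slice $f(\cdot,\lambda)$ and integrate in $\lambda$''.

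One step deserves slightly more than ``direct computation''. In the ultraproduct case, the second displayed identity
\[
\|(T\otimes\id_V)s\|_{L^2(\Omega;V)}^2 \;=\; \lim_{i\to\mathcal U}\|(T\otimes\id_{V_i})s^i\|_{L^2(\Omega;V_i)}^2
\]
asks you to commute $\int_\Omega$ with $\lim_{\mathcal U}$, since $(T\otimes\id_V)s=\sum_j (T\chi_{E_j})\,v_j$ is in general no longer a simple function in $\omega$. The clean way to justify this is to approximate each $T\chi_{E_j}$ in $L^2(\Omega)$ by simple functions: because $\sup_{i,j}\|v_{j,i}\|<\infty$ (the $v_j$ lie in the ultraproduct), the resulting approximation of $\sum_j(T\chi_{E_j})v_{j,i}$ in $L^2(\Omega;V_i)$ is uniform in $i$, and for simple scalar coefficients the integral becomes a finite sum in which the ultralimit commutes trivially. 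With this remark your proof is complete.
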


\begin{proof}
The fact that $\mathcal{B}$ is closed under quotients, subspaces and ultraproducts of Banach spaces was shown in \cite[Lemma 3.1]{Salle}. The fact that $\mathcal{B}$ is closed under $\ell_2$-sums is straight-forward and left for the reader (we will not make any use of it in this paper).

Let $(\Lambda, \nu)$ be a measure space with a finite measure and $V \in \mathcal{B}$. By our definition of vector valued spaces using simple functions, it is enough to check that the inequality holds for simple functions $s: \Omega \rightarrow L^2 (\Lambda ; V)$. Moreover, it is enough to check for simple functions  $s: \Omega \rightarrow L^2 (\Lambda ; V)$ whose values are simple functions in $L^2 (\Lambda ; V)$. In other words, if we identify $L^2 (\Omega ; L^2 (\Lambda ; V))$ with $L^2 (\Omega \times \Lambda ; V)$, we need to show that the needed inequality holds for functions of the form:
$$s(\omega, \lambda) = \sum_{i=1}^n \sum_{j=1}^m  \chi_{E_i} (\omega) \chi_{F_j} (\lambda) v_{i,j} ,$$
where $\lbrace E_1,...,E_n \rbrace$ is a measurable partition of $\Omega$, $\lbrace F_1,...,F_m \rbrace$ is a measurable partition of $\Lambda$, and $v_{i,j} \in V$.
Let $s$ be as above, then 
$$\Vert s \Vert^2 =  \sum_{i=1}^n \sum_{j=1}^m \mu (E_i) \nu (F_j) \Vert v_{i,j} \Vert_V^2.$$
We recall that for every measurable set $E \subseteq \Omega$ and every $v \in V$, the action of $T \otimes id_V$ on $\chi_E v$ is defined by
$$(T \otimes id_V)(\chi_E v) = T(\chi_E) v.$$
Similarly, for every function $f \in L^2 (\Lambda ; V)$, the action of $T \otimes id_{L^2 (\Lambda ; V)}$ on $\chi_E f$ is defined by 
$$(T \otimes id_{L^2 (\Lambda ; V)})(\chi_E f) = T(\chi_E) f.$$
Therefore, the action of $T \otimes id_{L^2 (\Lambda ; V)}$ on $s$ is as follows (we are abusing the notation; formally, the action of $T \otimes id_{L^2 (\Lambda ; V)}$ is defined on  $L^2 (\Omega ; L^2 (\Lambda ; V))$ and not on $L^2 (\Omega \times \Lambda ; V)$):
\begin{dmath*}
{(T \otimes id_{L^2 (\Lambda ; V)}) s = \sum_{i=1}^n  T(\chi_{E_i}) \sum_{j=1}^m \chi_{F_j} v_{i,j} = 
\sum_{j=1}^m \chi_{F_j} \sum_{i=1}^n  T(\chi_{E_i}) v_{i,j} =} \\
\sum_{j=1}^m \chi_{F_j} \sum_{i=1}^n  (T \otimes id_V) (\chi_{E_i} v_{i,j}) =
\sum_{j=1}^m \chi_{F_j} (T \otimes id_V) (\sum_{i=1}^n \chi_{E_i}  v_{i,j}).
\end{dmath*}
Note that written as above, for every $j$, $\sum_{i=1}^n (\chi_{E_i})  v_{i,j} \in L^2 (\Omega ; V)$ and therefore since $V \in \mathcal{E}$, we have for every $j$ that
$$\Vert  (T \otimes id_V) (\sum_{i=1}^n (\chi_{E_i})  v_{i,j}) \Vert^2 \leq C^2 \Vert \sum_{i=1}^n (\chi_{E_i})  v_{i,j} \Vert^2.$$
This yields that
\begin{dmath*}
\Vert (T \otimes id_{L^2 (\Lambda ; V)}) s \Vert^2 = 
\sum_{j=1}^m \nu (F_j)  \Vert (T \otimes id_V) (\sum_{i=1}^n (\chi_{E_i})  v_{i,j}) \Vert^2 \leq \\
\sum_{j=1}^m \nu (F_j) C^2 \Vert \sum_{i=1}^n (\chi_{E_i})  v_{i,j} \Vert^2 = 
C^2 \sum_{j=1}^m \nu (F_j) \sum_{i=1}^n \mu (E_i) \Vert v_{i,j} \Vert^2 = C^2 \Vert s \Vert^2,
\end{dmath*}
as needed.
\end{proof}

\subsection{Group representations on Banach spaces} 
Let $G$ be a locally compact group and $V$ a Banach space. Let $\pi$ be a representation $\pi :  G \rightarrow B (V)$, where $B(V)$ are the bounded linear operators on $V$. Throughout this paper we shall always assume $\pi$ is continuous with respect to the strong operator topology without explicitly mentioning it. We recall that given $\pi$, the dual representation $\pi^* : G \rightarrow B (V^*)$ is defined as 
$$\langle v, \pi^* (g) u  \rangle =  \langle \pi (g^{-1}) .v, u  \rangle, \forall g \in G, v \in V, u \in V^*.$$
We remark that $\pi^*$ might not be continuous for a general Banach space, but it is continuous for a large class of Banach spaces, called Asplund spaces defined below.

\subsection{Asplund spaces}
\begin{definition}
A Banach space $V$ is said to be an Asplund space if every separable subspace of $V$ has a separable dual.
\end{definition}

There are many examples of Asplund spaces - for instance every reflexive space is Asplund (see \cite{Yost} for an exposition on Asplund spaces). The reason we are interested in Asplund spaces is the following theorem of Megrelishvili:

\begin{theorem}\cite[Corollary 6.9]{Megre}
\label{Asplund implies continuous dual rep}
Let $G$ be a topological group and let $\pi$ be a continuous representation of $G$ on a Banach space $V$. If $V$ is an Asplund space, then the dual representation $\pi^*$ is also continuous.
\end{theorem}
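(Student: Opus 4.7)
The strategy is to upgrade the obvious weak-$*$-continuity of $\pi^*$ to norm continuity using the fragmentability characterization of Asplund spaces: $V$ is Asplund if and only if every weak-$*$-compact subset of $V^*$ is norm-fragmented, meaning that for every $\varepsilon>0$ and every non-empty $A \subseteq K$ there is a weak-$*$-open set $U$ with $U \cap A \neq \emptyset$ and $\|\cdot\|$-diameter less than $\varepsilon$. First, for any $u \in V^*$ and $v \in V$, continuity of $g \mapsto \langle v, \pi^*(g)u\rangle = \langle \pi(g^{-1})v, u\rangle$ is immediate from strong continuity of $\pi$; hence the orbit map $\phi_u : G \to (V^*, w^*)$, $\phi_u(g) = \pi^*(g)u$, is weak-$*$-continuous. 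A Banach--Steinhaus argument in a Baire neighborhood $N$ of the identity yields $\sup_{g \in N}\|\pi^*(g)\| < \infty$, so the weak-$*$-closure $\mathcal{K}$ of $\phi_u(N)$ in $V^*$ is norm-bounded and weak-$*$-compact.

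Now the Asplund hypothesis enters: $\mathcal{K}$ is norm-fragmented. A classical Namioka-type result (see Namioka, Jayne--Rogers) asserts that any continuous map from a Baire space into a compact Hausdorff space fragmented by a lower-semicontinuous metric is continuous for that metric on a dense $G_\delta$ subset of the domain. Since the norm is weak-$*$-lower-semicontinuous on bounded subsets of $V^*$, applying this to $\phi_u|_N : N \to (\mathcal{K}, w^*)$ produces some $g_0 \in N$ at which $\phi_u$ is norm-continuous.

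The final step is to propagate this single point of norm-continuity to the identity $e$ via the group structure. Define $V^*_c := \{u \in V^* : \phi_u \text{ is norm-continuous at } e\}$. A routine $3\varepsilon$-argument using local norm-boundedness of $\pi^*$ shows $V^*_c$ is norm-closed, and the identity
\[
\pi^*(gh)u - \pi^*(h)u = \pi^*(h)\bigl(\pi^*(h^{-1}gh)u - u\bigr)
\]
together with continuity of conjugation in $G$ shows $V^*_c$ is $\pi^*$-invariant. Combined with the observation that norm-continuity of $\phi_u$ at $g_0$ forces $\pi^*(g_0)u \in V^*_c$ (substitute $g = h g_0$ and let $h \to e$), $\pi^*$-invariance gives $u = \pi^*(g_0^{-1})(\pi^*(g_0)u) \in V^*_c$; since $u \in V^*$ was arbitrary, $V^*_c = V^*$ and $\pi^*$ is strongly continuous. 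The principal technical obstacle is the Namioka-type fragmentation-to-continuity transfer in the second paragraph, which depends on $G$ being Baire; for the applications in this paper $G$ is locally compact (an $\ell$-adic Lie group or a cocompact lattice therein), so this hypothesis is automatic.
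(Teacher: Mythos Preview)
The paper does not prove this statement: it is quoted verbatim as \cite[Corollary~6.9]{Megre} and used as a black box, so there is no in-paper argument to compare your proposal against.

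On its own merits your sketch is sound for the case that matters here. The weak-$*$ continuity of each orbit map, the local norm-boundedness of $\pi^*$ via uniform boundedness, the Namioka--Jayne--Rogers transfer from fragmentability to a point $g_0$ of norm continuity, and the propagation $\pi^*(g_0)u\in V^*_c \Rightarrow u=\pi^*(g_0^{-1})\pi^*(g_0)u\in V^*_c$ all go through, and your conjugation identity is the right device for $\pi^*$-invariance of $V^*_c$. All of this requires $G$ to be Baire, which you acknowledge; since the only group to which the paper actually applies the theorem is the locally compact $\ell$-adic Lie group $G=\mathbb{G}(K)$ (via the induced representation), this restriction is harmless for the application.

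The one gap relative to the statement \emph{as written} is generality: Megrelishvili proves the result for an arbitrary topological group, with no Baire hypothesis. His argument does not run a Namioka-type theorem on $G$ itself; fragmentability is exploited on weak-$*$-compact pieces of $V^*$ within his framework of fragmented $G$-flows, so that compactness on the Banach-space side, rather than a Baire property of $G$, carries the weight. Your route therefore recovers exactly the special case the paper needs, but not the theorem in the full generality in which it is stated.
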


Asplund spaces can be alternatively characterized as Banach spaces that have the Radon-Nikodym property (see definition in \cite{Yost}). Using this characterization it follows from a result of Sundaresan \cite{Sund} that the property of being Asplund is preserved when considering vector values $L^2$-spaces:
\begin{theorem}{\cite[Theorem 1]{Sund}}
\label{L^2 of Asplund is Asplund}
Let $V$ be a Banach space and let $(\Omega, \mu)$ be a measure space with a finite measure. Then $V$ is Asplund if and only if $L^2 (\Omega ; V)$ is Asplund.
\end{theorem}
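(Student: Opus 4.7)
The plan is to use the Radon--Nikod\'ym characterization of Asplund spaces: a Banach space $W$ is Asplund if and only if its dual $W^*$ has the Radon--Nikod\'ym property (RNP). This reduces \cref{L^2 of Asplund is Asplund} to two classical facts from vector-valued integration theory.

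The easy direction, from $L^2(\Omega;V)$ Asplund to $V$ Asplund, follows because the Asplund property is hereditary for closed subspaces (a separable subspace of $V$ is also a separable subspace of $L^2(\Omega;V)$, so its dual is separable). Concretely, assuming $\mu(\Omega) > 0$ (otherwise the statement is vacuous), one can pick any measurable $A \subseteq \Omega$ with $0 < \mu(A) < \infty$ and observe that $v \mapsto \mu(A)^{-1/2} \chi_A \otimes v$ embeds $V$ isometrically as a closed subspace of $L^2(\Omega;V)$, so $V$ inherits the property.

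For the converse, I would assume $V$ Asplund, so that $V^*$ has RNP, and establish the Asplund property of $L^2(\Omega;V)$ by showing its dual has RNP. The first ingredient is the Bochner--Phillips duality theorem (see e.g.\ Diestel--Uhl): when $V^*$ has RNP and $(\Omega,\mu)$ is finite, there is a canonical isometric isomorphism $L^2(\Omega;V)^* \cong L^2(\Omega;V^*)$, realized by the pairing $\langle f,g\rangle = \int_\Omega \langle f(\omega),g(\omega)\rangle\, d\mu(\omega)$. The second ingredient is the stability theorem for RNP under vector-valued $L^p$: for $1 \leq p < \infty$ and a finite measure space, $L^p(\mu;W)$ has RNP if and only if $W$ has RNP, proved via the Chatterji martingale characterization of RNP. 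Applying the second fact with $W = V^*$ yields that $L^2(\Omega;V^*)$ has RNP, hence so does $L^2(\Omega;V)^*$, and therefore $L^2(\Omega;V)$ is Asplund as required.

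The main obstacle is the duality theorem $L^2(\Omega;V)^* \cong L^2(\Omega;V^*)$. The inclusion $L^2(\Omega;V^*) \hookrightarrow L^2(\Omega;V)^*$ is elementary, but the surjectivity genuinely uses the RNP of $V^*$: given a continuous linear functional $\Phi$ on $L^2(\Omega;V)$, one builds a vector measure $E \mapsto \Phi(\chi_E \otimes \cdot) \in V^*$ of bounded variation and then invokes RNP to extract a Bochner-measurable density $g : \Omega \to V^*$ representing $\Phi$. Once this step is in hand, the remaining arguments are standard martingale-convergence manipulations, so the entire proof hinges on getting the duality right.
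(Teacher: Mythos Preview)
The paper does not supply its own proof of this theorem; it is quoted as \cite[Theorem 1]{Sund}, and the sentence immediately preceding the statement makes clear that the intended route is precisely the Radon--Nikod\'ym characterization of Asplund spaces that you use. Your argument is correct and matches that route: the easy direction via hereditarity to closed subspaces, and the substantive direction via Asplund $\Leftrightarrow$ dual has RNP, the duality $L^2(\Omega;V)^* \cong L^2(\Omega;V^*)$ valid when $V^*$ has RNP, and the stability of RNP under passage to vector-valued $L^p$ over a finite measure space.
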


\subsection{Group cohomology for groups acting on simplicial complexes}


Let $X$ be an $n$-dimensional simplicial complex and let $G$ be a group acting on $X$. Denote $X(k)$ to be the set of $k$-faces of $X$ and $\vec{X}(k)$ to be the set of ordered $k$-simplices of $X$. Let $V$ be a vector space and $\pi$ a representation of $G$ on $V$. 
Let $0 \leq k \leq n$ and let $\phi : \vec{X}(k) \rightarrow V$. Recall the following definitions:
\begin{itemize}
\item $\phi$ is anti-symmetric if for every permutation $\tau \in Sym \lbrace 0,...,k \rbrace$ and every $(v_{i_0},...,v_{i_k})$, $\phi ((v_{i_{\tau(0)}},...,v_{i_{\tau(k)}})) = \sgn (\tau) \phi ((v_{i_0},...,v_{i_k}))$.
\item $\phi$ is twisted by $\pi$, if for every $(v_{i_0},...,v_{i_k})$ and every $g \in G$, 
$$\pi (g)  \phi ((v_{i_0},...,v_{i_k})) = \phi (g.(v_{i_0},...,v_{i_k})).$$
\end{itemize}
 
For $0 \leq k \leq n$, denote $C^k (X, \pi)$ to be the space of maps $\phi : \vec{X}(k) \rightarrow V$ that are anti-symmetric and twisted by $\pi$. Define the differential map $d_k : C^k (X, \pi) \rightarrow C^{k+1} (X, \pi)$ in the usual way:
$$(d_k \phi) ((v_0,...,v_{k+1})) = \sum_{i=0}^{k+1} (-1)^i \phi ((v_0,..., \widehat{v_i},...,v_{k+1})).$$
As in the case of simplicial (untwisted) cohomology, we have that $d_{k+1} \circ d_k =0$ and $H^k (X,\pi ) = \Ker (d_k) / \im (d_{k-1})$. The next theorem states that under certain conditions, this cohomology is isomorphic to the group cohomology of $G$ with respect to the representation $\pi$:

\begin{theorem}\cite[X.1.12]{BorelW} 
\label{group cohomology using simplicial complex thm}
Let $G$ be a topological group and $X$ a contractible, locally finite simplicial complex. Assume that $G$ acts simplicially on $X$ and that this action is cocompact and proper. Assume further, that $V$ is a Banach space and $\pi$ is a continuous representation of $G$ on $V$, then $H^* (G,\pi) = H^* (X, \pi)$.
\end{theorem}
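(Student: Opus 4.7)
The plan is to identify both sides with the cohomology of a suitable double complex and use two spectral sequences. Concretely, consider the double complex
\[
C^{p,q} = C^p_{\mathrm{cont}}(G;\, C^q(X,\pi)),
\]
where $C^p_{\mathrm{cont}}(G;W)$ denotes continuous inhomogeneous cochains on $G$ with values in a continuous $G$-Banach module $W$, and the $G$-module structure on $C^q(X,\pi) = C^q(\vec{X}(q),V)^{\mathrm{antisym}}$ is the natural one coming from the action on $\vec{X}(q)$ and from $\pi$ on $V$. (One has to give $C^q(X,\pi)$ the topology of uniform convergence on compacta; local finiteness of $X$ together with cocompactness of the action makes this manageable.) The two differentials come from the group cobar differential and from the simplicial differential $d_q$ introduced in the excerpt.

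Now run the two spectral sequences. Taking cohomology in the $q$-direction first: by contractibility of $X$, the augmented simplicial complex $C^\bullet(X,V)$ is a resolution of the constant coefficient module $V$, so the $E_2$-page is concentrated on the row $q=0$ and equals $H^p_{\mathrm{cont}}(G,V) = H^p(G,\pi)$. Taking cohomology in the $p$-direction first: by cocompactness of the $G$-action we can choose a finite set $\Sigma_q$ of orbit representatives for $\vec{X}(q)/G$, and the evaluation map
\[
C^q(X,\pi) \longrightarrow \bigoplus_{\sigma \in \Sigma_q} \mathrm{Ind}_{G_\sigma}^{G}(V)
\]
identifies $C^q(X,\pi)$ as a direct sum of representations induced from the stabilizers $G_\sigma$. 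Since the action is proper, each $G_\sigma$ is compact, so by Shapiro's lemma together with the vanishing of continuous cohomology of a compact group with coefficients in a continuous Banach representation (via Haar-averaging, i.e.\ the Bochner integral defines a $G_\sigma$-invariant projection), we get $H^p_{\mathrm{cont}}(G, C^q(X,\pi)) = 0$ for $p>0$ and equals $C^q(X,\pi)^G$ for $p=0$. Hence this spectral sequence collapses on its $E_2$-page to $H^q(X,\pi)$.

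Both spectral sequences converge to the total cohomology of the double complex, so $H^*(G,\pi) \cong H^*(X,\pi)$ as required.

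The main obstacle I anticipate is the analytic bookkeeping: one has to ensure at every step that the relevant modules are genuine continuous $G$-Banach (or at least Fr\'echet) modules so that $C^\bullet_{\mathrm{cont}}(G;-)$ behaves like a $\delta$-functor (exactness on short exact sequences of modules with continuous sections), that Shapiro's lemma is valid for continuous cohomology with Banach coefficients induced from a compact subgroup, and that compact-group averaging against Haar measure really produces a bounded $G_\sigma$-equivariant projection onto invariants. These are all standard once one knows $V$ is reasonable (e.g.\ the Bochner integral is well-defined for strongly continuous Banach representations), but collecting them rigorously is where most of the actual work lies, and it is why Borel--Wallach devote substantial setup to the continuous framework before arriving at X.1.12.
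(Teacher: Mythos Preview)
The paper does not give its own proof of this theorem at all: it is simply quoted from Borel--Wallach \cite[X.1.12]{BorelW} and used as a black box. Your outline is exactly the standard Borel--Wallach argument (a double complex with the two spectral sequences collapsing via contractibility of $X$ on one side and acyclicity of the cochain modules as $G$-modules on the other), so in that sense you are reproducing what the cited reference does rather than offering an alternative.

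Two small remarks on the sketch itself. First, the $G$-modules $C^q(X,V)$ are naturally \emph{coinduced} (all functions on the orbit) rather than induced (finitely supported ones); the version of Shapiro one needs is therefore the Eckmann--Shapiro isomorphism $H^*_{\mathrm{cont}}(G,\mathrm{Coind}_{G_\sigma}^G V)\cong H^*_{\mathrm{cont}}(G_\sigma,V)$, which is the one that holds without finiteness assumptions and is indeed what Borel--Wallach use. Second, the ``contractibility $\Rightarrow$ resolution'' step has to be read as: the augmented cochain complex admits a \emph{continuous} contracting homotopy, so that applying $C^p_{\mathrm{cont}}(G;-)$ preserves exactness; this is exactly the kind of point you flag under ``analytic bookkeeping'', and it is handled in Borel--Wallach by working in the category of topological $G$-modules with strengthened exactness (s-injective resolutions). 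None of this is a gap in your approach, just confirmation that the places you identified as requiring care are precisely where the work lies.
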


\section{Shapiro's Lemma}
\label{Shapiro's lemma sec}

\subsection{Framework}
\label{Framework subsec}
The aim of this section is to prove a version of Shapiro's Lemma. 

We fix the following notations: $X$ will denote an $n$-dimensional pure (i.e., every maximal cell is $n$-dimensional) contractible simplicial complex that is $(n+1)$-colorable (i.e., the vertices of $X$ can be colored by $n+1$ colors and every $n$-dimensional cell of $X$ has a vertex of every color) and locally finite (i.e., every vertex of $X$ is contained in a finite number of simplices). Throughout this section, $G$ will denote a locally compact, unimodular topological group with a Haar measure $\mu$, acting properly and cocompactly on $X$ such that the action preserves the coloring and $G$ acts transitively on the $n$-dimensional simplices of $X$ (note that this implies that $G$ is compactly generated). We denote by $\triangle$ a fixed $n$-dimensional simplex of $X$ that serves as the fundamental domain for the action of $G$. We denote by $\Gamma$ a countable subgroup of $G$ that also acts properly and cocompactly on $X$. So $\Gamma$ is a discrete cocompact subgroup of $G$. The case of interest for us is when $G=\mathbb{G}(K)$ - the $K$-points of a simple, $K$-rank $n$, $K$-algebraic group $\mathbb{G}$ when $K$ is a non-archimedean local field with a residue field of order $q$. In this case, $G$ acts properly on the Bruhat-Tits building $X$ associated with it, which is a contractible, pure $n$-dimensional, locally finite, $(n+1)$-colorable simplicial complex and the fundamental domain of the action of $G$ on $X$ is a single $n$-dimensional simplex. The thickness of $X$ is the minimal degree of all its $1$-dimensional links and it tends to infinity as $q$ tends to infinity. In this case, $\Gamma$ is a uniform (= cocompact) lattice, and by Margulis arithmeticity theorem (see \cite[Chapter 6]{Zimmer}), if $n \geq 2$, it is an arithmetic lattice.

\subsection{Shapiro's Lemma}

\begin{definition}
Let $G$ and $\Gamma$ as above, $V$ be a Banach space. Denote by $\nu$ the invariant measure on $G / \Gamma$ induced by the Haar measure of $G$. Define the Banach space 
$L^2 (G / \Gamma ; V)$ to be the space of Bochner measurable functions $f: G / \Gamma \rightarrow V$ with the norm:
\begin{equation}
\label{norm G/Gamma}
\Vert f \Vert = \left( \int_{G / \Gamma} \Vert f (g) \Vert_V^2 d \nu (g) \right)^{\frac{1}{2}}.
\end{equation}

By choosing a fundamental domain $D$ for the action of $\Gamma$ on $G$, we can identify functions $L^2 (G / \Gamma ; V)$ with  $L^2 (D ; V)$, where $D$ is taken with the restriction of the measure $\mu$.

Let $\pi$ be an isometric representation of $\Gamma$ on $V$. The induced representation of $\pi$ from $\Gamma$ to $G$, denoted $\Ind_{\Gamma}^G (\pi)_{L^2}$, is defined as follows:
\begin{dmath*}
\Ind_{\Gamma}^G (\pi)_{L^2} = {\lbrace f : G \rightarrow V }:  {\forall g \in G, h \in \Gamma, f (gh^{-1}) = \pi (h) f (g) \text{ and } f \in L^2 (G / \Gamma) \rbrace},
\end{dmath*}
where $f \in L^2 (G / \Gamma)$ means that $f$ is Bochner measurable when restricted to $D$ and with the norm defined in \eqref{norm G/Gamma} above. The reader should note that $\Vert f(g) \Vert$ is well defined on $G / \Gamma$, because $\pi$ is isometric and therefore for every $h \in \Gamma$, $g \in G$, $\Vert f (gh^{-1}) \Vert = \Vert \pi (h) f (g) \Vert  =  \Vert f(g) \Vert$.

Also, $G$ acts on $\Ind_{\Gamma}^G (\pi)_{L^2}$ by left translation, denoted $\lambda_{\Ind_{\Gamma}^G (\pi)_{L^2}}$, as: 
$$\lambda_{\Ind_{\Gamma}^G (\pi)_{L^2}}(g) f (g') = f( g^{-1} g'), \: \forall g,g' \in G.$$
\end{definition}

\begin{remark}
\label{equivalent definition}
The induced representation can also be defined as follows: 
define $\Ind_{\Gamma}^G (\pi)$ to be the vector space 
$$\Ind_{\Gamma}^G (\pi) = \lbrace f : G \rightarrow V \text{ continuous} : \forall g \in G, h \in \Gamma, f (gh^{-1}) = \pi (h) f (g) \rbrace,$$
and complete the vector space with respect to the $L^2$ norm as in the definition of $\Ind_{\Gamma}^G (\pi)_{L^2}$. The equivalence between these definitions is proven in \cite[Chapter 4]{Gaal} in the setting of isometric actions on Hilbert spaces, but the proof can be generalized to our setting. We will not make any use of this equivalent definition.
\end{remark}


\begin{proposition}
\label{norm of induced rep prop}
Let $G$, $\Gamma$, $V$ and $\pi$ be as above. Then $\Ind_{\Gamma}^G (\pi)_{L^2}$ is a Banach space and the action of $G$ on $\Ind_{\Gamma}^G (\pi)_{L^2}$ by left translation, denoted $\lambda_{\Ind_{\Gamma}^G (\pi)_{L^2}}$, is an isometric continuous representation of $G$ on $\Ind_{\Gamma}^G (\pi)_{L^2}$. 
\end{proposition}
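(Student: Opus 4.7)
The plan is to proceed in three steps: establishing the Banach space structure, verifying that $\lambda_{\Ind_{\Gamma}^G (\pi)_{L^2}}$ is isometric, and proving strong continuity.

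For the Banach space structure, choose a Borel fundamental domain $D \subset G$ for the right action of $\Gamma$ with $\overline{D}$ compact (possible since $G/\Gamma$ is compact) and $\partial D$ of measure zero. Since $\pi$ is isometric, the restriction map $f \mapsto f|_D$ is a well-defined linear isometry from $\Ind_{\Gamma}^G (\pi)_{L^2}$ onto $L^2(D; V)$; conversely, given $\phi \in L^2(D; V)$, one extends to $G$ via $f(dh^{-1}) = \pi(h) \phi(d)$ for $d \in D$, $h \in \Gamma$. Since $L^2(D; V)$ is a Banach space, so is $\Ind_{\Gamma}^G (\pi)_{L^2}$. The isometry of the action is then immediate: by $\Gamma$-equivariance of $f$ and isometry of $\pi$, the function $g \mapsto \|f(g)\|_V$ descends to a measurable function on $G/\Gamma$, and $\nu$ is $G$-invariant (using unimodularity of $G$), so a change of variables yields $\|\lambda_{\Ind_{\Gamma}^G (\pi)_{L^2}}(g_0) f\| = \|f\|$.

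For continuity, it suffices by isometry to show $\lim_{g_0 \to e} \|\lambda_{\Ind_{\Gamma}^G (\pi)_{L^2}}(g_0) f - f\| = 0$ for every $f$. The strategy is to verify this on a dense subspace of genuinely continuous elements and then extend by a $3\varepsilon$-argument using isometry. For the dense subspace, define a Poincar\'e-type operator $P : C_c(G; V) \to \Ind_{\Gamma}^G (\pi)_{L^2}$ by
$$(P\tilde{f})(g) = \sum_{h \in \Gamma} \pi(h)\tilde{f}(gh^{-1}).$$
Since $\tilde{f}$ has compact support and $\Gamma$ acts properly, the sum is locally finite and $P\tilde{f}$ is continuous on $G$ with the required $\Gamma$-equivariance. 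Density of $P(C_c(G; V))$ in $\Ind_{\Gamma}^G (\pi)_{L^2}$ reduces, under the identification of step (1), to density of $C_c(D^\circ; V)$ in $L^2(D; V)$: if $\phi \in C_c(D^\circ; V)$, its trivial extension $\tilde{\phi} \in C_c(G; V)$ satisfies $(P\tilde{\phi})|_D = \phi$. For $f = P\tilde{f}$, the function $f$ is continuous and therefore uniformly continuous on any compact set containing $U^{-1}\overline{D}$ for a small compact neighborhood $U$ of $e$; hence $\sup_{g \in D} \|f(g_0^{-1}g) - f(g)\|_V \to 0$ as $g_0 \to e$ in $U$. Combined with finiteness of $\nu(G/\Gamma)$, this yields $\|\lambda_{\Ind_{\Gamma}^G (\pi)_{L^2}}(g_0) f - f\| \to 0$ on the dense subspace, and the $3\varepsilon$-argument completes the proof.

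The main obstacle is the continuity step: one needs a dense subspace of $\Ind_{\Gamma}^G (\pi)_{L^2}$ consisting of functions that are genuinely continuous on all of $G$, and must simultaneously verify density and pointwise-in-$g_0$ convergence on that subspace in the presence of the $\Gamma$-equivariance constraint. The Poincar\'e-series construction handles this cleanly, provided one is careful that the defining sum is pointwise finite (via properness of the $\Gamma$-action) and that continuity of $\tilde{f}$ on the interior of the fundamental domain is enough to match the density of $L^2$ by interior-supported functions.
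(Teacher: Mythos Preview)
Your proposal is correct and, in fact, supplies the details that the paper deliberately omits: the paper's proof of this proposition is the single sentence ``The fact that $\lambda_{\Ind_{\Gamma}^G (\pi)_{L^2}}$ is isometric and continuous when $\pi$ is isometric is straight-forward and left for the reader.'' There is therefore no approach to compare against; you have carried out precisely the routine verification the authors had in mind, and the identification with $L^2(D;V)$ you use in step~(1) is exactly the one the paper records just before the proposition.

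A couple of minor remarks on your write-up. First, your claim that one may choose a Borel fundamental domain $D$ with $\overline{D}$ compact and $\mu(\partial D)=0$ is stated without justification; in the specific setting of the paper (a proper cocompact action on a locally finite simplicial complex, with $\Gamma$ discrete in $G$) this is unproblematic, but it is worth noting that your argument uses this hypothesis, since the continuity step relies both on $\overline{D}$ being compact and on $C_c(D^\circ;V)$ being dense in $L^2(D;V)$. Second, in the verification that $(P\tilde{\phi})|_D = \phi$ you are implicitly using that the translates $Dh$, $h\in\Gamma$, are pairwise disjoint, so that $dh^{-1}\in\operatorname{supp}\tilde{\phi}\subset D$ forces $h=e$; this is exactly what ``fundamental domain'' means, but it might be worth saying so explicitly.
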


\begin{proof}
The fact that $\lambda_{\Ind_{\Gamma}^G (\pi)_{L^2}}$ is isometric and continuous when $\pi$ is isometric is straight-forward and left for the reader. 
\end{proof}

Classically, Shapiro's Lemma is the equality $H^* (\Gamma , \pi) = H^* (G, \lambda_{\Ind_{\Gamma}^G (\pi)})$. This equality is proven in \cite{BorelW} for $\Ind_{\Gamma}^G (\pi)$ defined in Remark \ref{equivalent definition}, but not for $\Ind_{\Gamma}^G (\pi)_{L^2}$ which is a larger space (see Remark \ref{equivalent definition}). Below, we will prove the equality $H^* (\Gamma , \pi) = H^* (G, \lambda_{\Ind_{\Gamma}^G (\pi)_{L^2}})$ under the assumptions on $G$ and $\Gamma$ that are stated in the beginning of this section. We suspect that this equality is true even without our added assumptions, but in the case we are interested in, the proof that we give for this equality is direct and elementary.

\begin{theorem}[$L^2$-Shapiro's Lemma with coefficients in Banach representations]
\label{L^2 Shapiro's lemma}
Let $X$, $G$, $\Gamma$ be as above, $V$ a Banach space and $\pi$ an isometric representation of $\Gamma$ on $V$. Then
$$H^* (\Gamma, \pi) =  H^* (G, \lambda_{\Ind_{\Gamma}^G (\pi)_{L^2}}).$$
\end{theorem}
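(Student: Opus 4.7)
The natural plan is to compute both sides of the claimed equality as the cohomology of the same underlying simplicial cochain complex on $X$, using \cref{group cohomology using simplicial complex thm}. More precisely, since $X$ is contractible and locally finite, and the actions of $\Gamma$ on $X$ (proper, cocompact, simplicial) and of $G$ on $X$ (proper, cocompact, simplicial) both satisfy the hypotheses, we get
\[
H^*(\Gamma,\pi)=H^*(X,\pi),\qquad H^*\bigl(G,\lambda_{\Ind_\Gamma^G(\pi)_{L^2}}\bigr)=H^*\bigl(X,\lambda_{\Ind_\Gamma^G(\pi)_{L^2}}\bigr),
\]
where the continuity of $\lambda_{\Ind_\Gamma^G(\pi)_{L^2}}$ needed on the right-hand side is supplied by \cref{norm of induced rep prop}. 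So it suffices to exhibit an isomorphism of cochain complexes $C^k(X,\pi)\cong C^k(X,\lambda_{\Ind_\Gamma^G(\pi)_{L^2}})$ commuting with the simplicial differential.

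The obvious candidate is the map $\Phi$ sending $\psi\in C^k(X,\pi)$ to the cochain $\Phi(\psi)\in C^k(X,\lambda_{\Ind_\Gamma^G(\pi)_{L^2}})$ defined by
\[
\Phi(\psi)(\sigma)(g):=\psi(g^{-1}\sigma),\qquad \sigma\in\vec X(k),\ g\in G.
\]
I would first verify the algebraic identities: the $\Gamma$-twisting of $\psi$ gives $\Phi(\psi)(\sigma)(gh^{-1})=\psi(hg^{-1}\sigma)=\pi(h)\Phi(\psi)(\sigma)(g)$ for $h\in\Gamma$, so $\Phi(\psi)(\sigma)$ lies in the induced representation as a function on $G/\Gamma$; the $G$-equivariance $\Phi(\psi)(g\sigma)=\lambda_{\Ind_\Gamma^G(\pi)_{L^2}}(g)\Phi(\psi)(\sigma)$ is immediate; antisymmetry in $\sigma$ is inherited from $\psi$; and the simplicial boundary formula pulls through the substitution $\sigma\mapsto g^{-1}\sigma$, so $\Phi$ commutes with $d_k$.

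For well-definedness in $L^2(G/\Gamma;V)$, I would use that $\Gamma$ acts cocompactly on $X$, so a fundamental domain $D\subset G$ for $\Gamma$ acting on $G$ can be chosen to be relatively compact. Since $G$ acts on the discrete set $X(k)$ through a quotient with open stabilizers, the map $g\mapsto g^{-1}\sigma$ is locally constant on $G$, hence takes only finitely many values on the closure of $D$, giving the uniform bound $\|\Phi(\psi)(\sigma)\|_{L^2(G/\Gamma;V)}^{2}\le\mu(D)\cdot\max_{g\in\overline D}\|\psi(g^{-1}\sigma)\|_V^{2}<\infty$; Bochner measurability follows because $\Phi(\psi)(\sigma)$ is a finite step-function of $g$ when restricted to $D$.

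To construct the inverse, given a $G$-equivariant cochain $\phi:\vec X(k)\to\Ind_\Gamma^G(\pi)_{L^2}$, I would set $\Psi(\phi)(\sigma):=\phi(\sigma)(e)$. The key point, and I expect the only subtlety of the argument, is that $\phi(\sigma)$ is a priori only an $L^2$-equivalence class, so evaluation at $e$ is not obviously defined. This is resolved by the $G$-equivariance applied to elements of the stabilizer $\mathrm{Stab}_G(\sigma)$: for every $k\in\mathrm{Stab}_G(\sigma)$ one has $\phi(\sigma)=\lambda_{\Ind_\Gamma^G(\pi)_{L^2}}(k)\phi(\sigma)$, so as an element of $L^2(G/\Gamma;V)$ the function $\phi(\sigma)$ is left-invariant under the compact open subgroup $\mathrm{Stab}_G(\sigma)$. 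Hence it has a canonical continuous (indeed locally constant) representative, and evaluation at $e$ is well-defined. One then checks directly that $\Psi(\phi)$ is antisymmetric and $\Gamma$-twisted by $\pi$ (using equivariance under $h\in\Gamma\subset G$), that $\Psi\circ\Phi=\id$ (by evaluating at $g=e$), and that $\Phi\circ\Psi=\id$ (using $G$-equivariance to recover the value at arbitrary $g$ from the value at $e$). This gives the required isomorphism of cochain complexes and completes the proof.
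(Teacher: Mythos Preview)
Your proposal is correct and follows essentially the same route as the paper's proof: both reduce to simplicial cochains via \cref{group cohomology using simplicial complex thm}, define the same map $\Phi(\psi)(\sigma)(g)=\psi(g^{-1}\sigma)$, and invert it by exploiting the $G_\sigma$-invariance of $\phi(\sigma)$ to extract a well-defined value (your ``evaluate the canonical locally constant representative at $e$'' is exactly the paper's ``take the a.e.\ constant value $x_\sigma$ on $G_\sigma$''). The only cosmetic difference is in establishing that $\Phi(\psi)(\sigma)\in L^2(G/\Gamma;V)$: the paper builds an explicit fundamental domain adapted to the $\Gamma$-orbits on $G.\sigma$ (\cref{f-phi sigma lemma}), whereas you argue more directly that any relatively compact fundamental domain meets only finitely many level sets of the locally constant map $g\mapsto g^{-1}\sigma$; both arguments use that stabilizers are open (equivalently, have positive Haar measure).
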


\begin{lemma}
\label{f-phi sigma lemma}
Let $X$, $G$, $\Gamma$ be as above, $V$ a Banach space and $\pi$ an isometric representation of $\Gamma$ on $V$. Given $\phi \in C^k (X, \pi)$ and $\sigma \in \vec{X}(k)$, define $f_{\phi, \sigma} : G \rightarrow V$ by 
$$f_{\phi, \sigma} (g) = \phi (g^{-1}.\sigma).$$ 
Then $f_{\phi, \sigma} \in \Ind_{\Gamma}^G (\pi)_{L^2}$ and if for some $g \in G$, $\phi (g^{-1}.\sigma) \neq 0$, then $\Vert f_{\phi, \sigma} \Vert >0$. 
\end{lemma}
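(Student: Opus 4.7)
The plan is to verify in order the three conditions that define membership of $f_{\phi,\sigma}$ in $\Ind_\Gamma^G(\pi)_{L^2}$, and then the non-vanishing statement. Twisted equivariance is a one-line check: for $g\in G$ and $h\in\Gamma$,
\[
f_{\phi,\sigma}(gh^{-1})=\phi\bigl((gh^{-1})^{-1}.\sigma\bigr)=\phi(hg^{-1}.\sigma)=\pi(h)\phi(g^{-1}.\sigma)=\pi(h)f_{\phi,\sigma}(g),
\]
where the penultimate equality uses that $\phi$ is twisted by $\pi$ on the $\Gamma$-action.

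For Bochner measurability, let $\psi\colon G\to\vec X(k)$ be the map $g\mapsto g^{-1}.\sigma$. Its image lies in the countable orbit $G.\sigma$, and each fibre has the form $\mathrm{Stab}_G(\sigma)\cdot g_0$, a closed (hence Borel) subset of $G$. So $\psi$ is Borel measurable when its codomain is given the discrete $\sigma$-algebra, and $f_{\phi,\sigma}=\phi\circ\psi$ is a countable-valued Borel function into $V$, which is therefore Bochner measurable. For the finiteness of the $L^2$-norm, I would choose a Borel fundamental domain $D\subset G$ for $G/\Gamma$ with $\overline D$ compact; such a $D$ exists because $\Gamma$ is cocompact in $G$. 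Picking any interior point $x_\sigma$ of $\sigma$, the map $g\mapsto g^{-1}.x_\sigma$ is continuous into the geometric realization of $X$, so $\{g^{-1}.x_\sigma:g\in\overline D\}$ is compact. By local finiteness of $X$ this compact set meets only finitely many open simplices, hence $\psi|_{\overline D}$ takes only finitely many values, $\Vert f_{\phi,\sigma}(\cdot)\Vert_V$ is bounded on $D$, and
\[
\Vert f_{\phi,\sigma}\Vert^2=\int_D\Vert\phi(g^{-1}.\sigma)\Vert_V^2\,d\mu(g)<\infty
\]
because $\mu(D)<\infty$.

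For the non-vanishing assertion, suppose $\phi(g_0^{-1}.\sigma)\ne 0$. In the paper's framework $\mathrm{Stab}_G(\sigma)$ is compact and open, so the coset $U=\mathrm{Stab}_G(\sigma)\cdot g_0$ is a non-empty open subset of $G$ on which $\psi$, and hence $f_{\phi,\sigma}$, is constantly equal to $\phi(g_0^{-1}.\sigma)$. The projection $G\to G/\Gamma$ is an open map and $\Gamma$ is discrete, so the image of $U$ in $G/\Gamma$ is a non-empty open set of positive $\nu$-measure, and on it $\Vert f_{\phi,\sigma}\Vert_V$ equals the positive constant $\Vert\phi(g_0^{-1}.\sigma)\Vert$, forcing $\Vert f_{\phi,\sigma}\Vert>0$. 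The only slightly delicate step is this last one: it genuinely uses that $G$ is totally disconnected (equivalently, that cell stabilizers are open), which in the paper's Bruhat--Tits setting is automatic, to ensure that the support of $f_{\phi,\sigma}$ has positive Haar measure rather than merely being non-empty.
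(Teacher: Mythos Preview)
Your argument is correct, but it proceeds quite differently from the paper's. The paper invests most of the effort in building a \emph{tailored} fundamental domain $D$ for $\Gamma$ in $G$: it first decomposes the $G$-orbit of $\sigma$ into finitely many $\Gamma$-orbits represented by $\sigma_1,\dots,\sigma_m$, chooses $g_i\in G$ with $g_i.\sigma=\sigma_i$, and then sets $D=\bigcup_i g_i^{-1}D_{\sigma_i}$ where each $D_{\sigma_i}$ is a fundamental domain for $\Gamma_{\sigma_i}$ in $G_{\sigma_i}$. With this choice, $f_{\phi,\sigma}$ is literally constant on each piece $g_i^{-1}D_{\sigma_i}$, and the norm is computed \emph{exactly} as
\[
\Vert f_{\phi,\sigma}\Vert^2=\sum_{i=1}^m \mu(D_{\sigma_i})\,\Vert\phi(\sigma_i)\Vert_V^2.
\]
Both the finiteness of the norm and its strict positivity (when some $\phi(\sigma_i)\ne 0$) are read off from this single formula, using $0<\mu(D_{\sigma_i})<\infty$.

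By contrast, you handle the three issues (measurability, $L^2$-finiteness, non-vanishing) by separate topological arguments: countable-valued Borel maps, compactness of $\overline D$ together with local finiteness of $X$, and openness of cell stabilizers. Your route is lighter in that it avoids the explicit decomposition of $G$ and the careful fundamental-domain construction; the paper's route buys an exact closed-form expression for the norm, which makes both conclusions immediate once the formula is in hand. Note also that the subtlety you flag---openness of $G_\sigma$ being needed to get positive measure---is equally present in the paper's argument (it is what makes $\mu(D_{\sigma_i})>0$), though the paper subsumes it under ``proper action''.
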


\begin{proof}
We note that for every $h \in \Gamma$ and every $g \in G$, we have that
$$f_{\phi, \sigma} (g h^{-1}) = \phi (h g^{-1}.\sigma ) = \pi (h) \phi (g^{-1} . \sigma ) =  \pi (h) f_{\phi, \sigma} (g).$$
Hence, we are left to show that $f_{\phi, \sigma} \in \Ind_{\Gamma}^G (\pi)_{L^2}$ and that if for some $g \in G$, $\phi (g^{-1}.\sigma) \neq 0$, then $\Vert f_{\phi, \sigma} \Vert >0$. Most of the work in the rest of this proof is choosing a convenient fundamental domain $D$ for the action of $\Gamma$ on $G$.

By our assumptions, $\Gamma$ acts cocompactly on $X$ and therefore $\Gamma \setminus \vec{X} (k)$ is finite. In particular, there are $\sigma_1,...,\sigma_m \in \vec{X}(k)$ such that 
$$ \lbrace g. \sigma : g \in G \rbrace = \bigcup_{i=1}^m  \lbrace h. \sigma_i : h \in \Gamma \rbrace,$$
and the union above is disjoint. Fix $g_i \in G$, $i=1,...,m$, such that $g_i. \sigma = \sigma_i$ (such $g_i$'s exist, because we assumed that $G \setminus X$ is a single colored $n$-dimensional simplex). It follows that for every $g \in G$, there are $h \in \Gamma$ and a unique $i$ such that 
$$g^{-1}. \sigma = h . \sigma_{i} = h g_{i} . \sigma,$$
i.e., $g_{i}^{-1} h^{-1} g^{-1}. \sigma = \sigma$. If we denote the stabilizer of $\sigma$ in $G$ by $G_\sigma$, we deduce that there is $g_\sigma \in G_\sigma$ such that $g^{-1} = h g_{i} g_\sigma$ and so 
$$G = \bigcup_{i=1}^m G_\sigma g_{i}^{-1} \Gamma = \bigcup_{i=1}^m g_i^{-1} (g_i G_\sigma g_{i}^{-1}) \Gamma = \bigcup_{i=1}^m g_i^{-1} (G_{g_i.\sigma} ) \Gamma = \bigcup_{i=1}^m g_i^{-1} (G_{\sigma_i} ) \Gamma$$
 and this is a disjoint union. 
 
For every $i$, denote $\Gamma_{\sigma_i} =  G_{\sigma_i} \cap \Gamma$ and choose $D_{\sigma_i}$ to be a fundamental domain for the action of $\Gamma_{\sigma_i}$ on $G_{\sigma_i}$. We claim that $D=\bigcup_{i=1}^m g_i^{-1} (D_{\sigma_i} )$ is a fundamental domain for the action of $\Gamma$ on $G$. Indeed, $D_{\sigma_i} \subseteq G_{\sigma_i}$ and therefore $D$ is defined by a disjoint union and 
$$\left( \bigcup_{i=1}^m g_i^{-1} D_{\sigma_i} \right) \Gamma = \bigcup_{i=1}^m g_i^{-1} D_{\sigma_i}  \Gamma_{\sigma_i} \Gamma = \bigcup_{i=1}^m g_i^{-1} G_{\sigma_i}  \Gamma = G,$$
as needed.
 
With this choice of fundamental domain, it follows that 
\begin{dmath*}
\int_{G / \Gamma} \Vert  f_{\phi, \sigma} (g) \Vert_V^2 d \nu (g) = 
\int_{D} \Vert  f_{\phi, \sigma} (g) \Vert_V^2 d \mu (g) = 
\sum_{i=1}^m \int_{g_i^{-1} D_{\sigma_i}} \Vert  f_{\phi, \sigma} (g) \Vert_V^2 d \mu (g) = \\
\sum_{i=1}^m \int_{D_{\sigma_i}} \Vert  f_{\phi, \sigma} (g_i^{-1} g) \Vert_V^2 d \mu (g) = 
\sum_{i=1}^m \int_{D_{\sigma_i}} \Vert \phi (g^{-1} g_i.\sigma) \Vert_V^2 d \mu (g) = \\
\sum_{i=1}^m \int_{D_{\sigma_i}} \Vert \phi (g^{-1} .\sigma_i) \Vert_V^2 d \mu (g) = 
\sum_{i=1}^m \int_{D_{\sigma_i}} \Vert \phi (\sigma_i) \Vert_V^2 d \mu (g) = \\
\sum_{i=1}^m \mu (D_{\sigma_i}) \Vert \phi (\sigma_i) \Vert_V^2.
\end{dmath*}
Note that by the assumption of proper action of $G$ and of $\Gamma$ on $X$, we have for every $1 \leq i \leq m$, that $0 < \mu (G_{\sigma_i}) < \infty$ and $\Gamma_{\sigma_i}$ is a finite group. Hence, $0 < \mu (D_{\sigma_i}) < \infty$ and $f_{\phi, \sigma} \in \Ind_{\Gamma}^G (\pi)_{L^2}$. Also note that if for some $g \in G$, $\phi (g^{-1}.\sigma) \neq 0$, then there is $1 \leq i_0 \leq m$, $\phi (\sigma_{i_0}) \neq 0$ and therefore 
$$\int_{G / \Gamma} \Vert  f_{\phi, \sigma} (g) \Vert_V^2 d \nu (g) \geq \mu (G_{\sigma_{i_0}} / \Gamma_{\sigma_{i_0}}) \Vert \phi (\sigma_{i_0}) \Vert_V^2 >0.$$
\end{proof}

We can now prove Shapiro's Lemma in our setting:
\begin{proof}
By Theorem \ref{group cohomology using simplicial complex thm}, it is enough to prove that $H^* (X, \pi) = H^* (X, \lambda_{\Ind_{\Gamma}^G (\pi)_{L^2}})$. We will prove this by finding bijective linear maps $\Phi_k : C^k (X,\pi) \rightarrow C^k (X, \lambda_{\Ind_{\Gamma}^G (\pi)_{L^2}})$  for $k=0,...,n$ such that for every $\phi \in C^{k} (X,\pi)$, $d_k \Phi_k (\phi) = \Phi_{k+1} (d_k \phi)$. The existence of such maps shows that $H^* (X, \pi) = H^* (X, \lambda_{\Ind_{\Gamma}^G (\pi)_{L^2}})$ as needed.

Define $\Phi_k : C^k (X,\pi) \rightarrow C^k (X, \lambda_{\Ind_{\Gamma}^G (\pi)_{L^2}})$ by 
$$(\Phi_k (\phi)) (\sigma) = f_{\phi, \sigma},$$
where $f_{\phi, \sigma}$ is defined as in Lemma \ref{f-phi sigma lemma}. 

There are several thing we need to check. First, we need to check that for every $\phi \in C^{k} (X,\pi)$, it holds that $\Phi (\phi) \in C^k (X, \lambda_{\Ind_{\Gamma}^G (\pi)_{L^2}})$. By Lemma \ref{f-phi sigma lemma}, we have that $f_{\phi, \sigma} \in \Ind_{\Gamma}^G (\pi)_{L^2}$. By the definition of $f_{\phi, \sigma}$, it is also clear that $\Phi (\phi)$ is anti-symmetric since $\phi$ is anti-symmetric. Moreover, $\Phi_k (\phi)$ is also twisted by $\lambda_{\Ind_{\Gamma}^G (\pi)_{L^2}}$: let $g,g' \in G$, then
\begin{dmath*}
{\lambda_{\Ind_{\Gamma}^G (\pi)_{L^2}} (g). ((\Phi (\phi)) (\sigma)) (g') = 
((\Phi (\phi)) (\sigma)) (g^{-1} g') =
f_{\phi, \sigma} (g^{-1} g') = } \\
{\phi ((g^{-1} g')^{-1} . \sigma) =
\phi ((g')^{-1}. (g. \sigma)) =
f_{\phi, g. \sigma} (g') =
((\Phi (\phi)) (g.\sigma)) (g'),}
\end{dmath*}
as needed. Thus $\Phi_k : C^k (X,\pi) \rightarrow C^k (X, \lambda_{\Ind_{\Gamma}^G (\pi)_{L^2}})$ as claimed above. 

Second, we note that if $\phi \not\equiv 0$, then for some $\sigma$, $\phi (\sigma) \neq 0$ and therefore by Lemma  \ref{f-phi sigma lemma}, $\Phi (\phi) \not\equiv 0$ and therefore $\Phi$ is injective. 

Third, we will check that $\Phi$ is surjective. Let $\psi \in C^k (X, \lambda_{\Ind_{\Gamma}^G (\pi)_{L^2}})$, then for every $\sigma \in \vec{X} (k)$, $\psi (\sigma) \in \Ind_{\Gamma}^G (\pi)_{L^2}$. Since $\psi$ is twisted by $\lambda_{\Ind_{\Gamma}^G (\pi)_{L^2}}$, we have that for every $g \in G_\sigma$,
$$\psi (\sigma) = \psi (g. \sigma) = \lambda_{\Ind_{\Gamma}^G (\pi)_{L^2}} (g) \psi (\sigma).$$
The above equality is an equality in $\Ind_{\Gamma}^G (\pi)_{L^2}$, i.e., for almost every $g' \in G$, 
$\psi (\sigma) (g') = \lambda_{\Ind_{\Gamma}^G (\pi)_{L^2}} (g) \psi (\sigma) (g') =  \psi (\sigma) (g^{-1} g')$. In particular, there is $x_\sigma \in V$ such that for almost every $g \in G_\sigma$, $\psi (\sigma) (g) = x_\sigma$. Define $\phi_\psi : \vec{X} (k) \rightarrow V$, by $\phi_\psi (\sigma) = x_\sigma$, where $x_\sigma$ is as above. 

We will show that $\phi_\psi \in C^k (X, \pi)$ and that $\Phi (\phi_\psi) = \psi$. The fact that $\phi_\psi$ is anti-symmetric follows directly from the fact that $\psi$ is anti-symmetric. To see that $\phi_\psi$ is twisted by $\pi$, we note that for every $h \in \Gamma$ and every $\sigma \in \vec{X} (k)$, $x_{h. \sigma}$ was defined such that for almost every $g' \in G_{h. \sigma}$, $\psi  (h. \sigma) (g') = x_{h.\sigma}$. Note that $G_{h. \sigma} = h G_\sigma h^{-1}$, and therefore, for almost every $g \in G_\sigma$, $\psi (h. \sigma) (h g h^{-1}) = x_{h.\sigma}$. Thus,
\begin{dmath*}
{x_{h.\sigma} = \psi (h. \sigma) (h g h^{-1}) = \lambda_{\Ind_{\Gamma}^G (\pi)_{L^2}} (h) \psi (\sigma) (h g h^{-1}) =}
 \psi (\sigma) (g h^{-1})  = \pi (h) \psi (\sigma) (g),
\end{dmath*}
and since this holds for almost every $g \in G_\sigma$, it follows that
$$\phi_\psi (h. \sigma) = x_{h.\sigma} = \pi (h) x_{\sigma} = \pi (h) \phi_\psi (\sigma),$$
as needed.
To see that $\Phi (\phi_\psi) = \psi$, we will show that for almost every $g\in G$ and every $\sigma \in \vec{X} (k)$, $\Phi (\phi_\psi) (\sigma)(g) = \psi (\sigma) (g)$. We note that for almost every $g \in G$ and almost every $g' \in  G_\sigma$, $x_{g^{-1}.\sigma} = 
\psi (g^{-1}.\sigma) (g^{-1} g' g)$. Therefore, for almost every $g \in G$ and almost every $g' \in G_\sigma$, 
\begin{dmath*}
{\Phi (\phi_\psi) (\sigma)(g) = 
f_{\phi_\psi, \sigma} (g) =
\phi_\psi (g^{-1}.\sigma) = 
x_{g^{-1}.\sigma} = 
\psi (g^{-1}.\sigma) (g^{-1} g' g) =} \\
\lambda_{\Ind_{\Gamma}^G (\pi)_{L^2}} ((g')^{-1} g)  \psi (g^{-1}.\sigma) (g) = \psi (\sigma) (g).
\end{dmath*}

Last, one can easily see that $\Phi$ is linear and direct computation shows that for $\phi \in C^{k} (X,\pi)$, $d_k \Phi_k (\phi) = \Phi_{k+1} (d_k \phi)$.
\end{proof}

\section{Proof of Theorem \ref{Vanishing of cohomoloogy thm}}
\label{Vanishing of Cohomology sec}

Let $G=\mathbb{G}(K)$ be the $K$-points of a simple, $K$-rank $n$, $K$-algebraic group $\mathbb{G}$ when $K$ is a non-archimedean local field with a residue field of order $q$. The group $G$ acts on a Bruhat-Tits building $X$ and as noted above, the conditions of \cref{Framework subsec} for the action are fulfilled.  

Below, we will describe the main theorem of \cite{OppVanBan} (stated in the \S 1.2 of \cite{OppVanBan}) in the setting above. The setup of this theorem is as follows:
\begin{enumerate}
\item The class of Banach spaces $\mathcal{E}' = \mathcal{E}_3 (\mathcal{E}_2 (\mathcal{E}_1 (r,C_1),\theta_2),C_3)$ is derived by different types of deformations of Hilbert spaces with respect to any chosen constants  $r > 20$, $C_1 \geq 1$, $1 \geq \theta_2 >0, C_3 \geq 1$ (the constants determine the extent of the deformations) - for an exact definition see in \cite[\S 1.1]{OppVanBan}.
\item The class of Banach spaces $\mathcal{E} = \overline{\mathcal{E}'}$ is the closure of $\mathcal{E}'$ under quotients, subspaces, $l_2$-sums and ultraproducts.    
\end{enumerate} 
Under this setup, the main theorem of \cite{OppVanBan} can be stated as follows: 
\begin{theorem}
\label{main thm of VanBan}
For every choice of constants $r > 20$, $C_1 \geq 1$, $1 \geq \theta_2 >0, C_3 \geq 1$, there is a constant $Q = Q (r,C_1, \theta_2, C_3, n)$ such that if $q>Q$, then for every $V \in \mathcal{E} = \overline{\mathcal{E}'}$ and every continuous isometric representation $\rho$ of $G$ on $V$ such that $\rho^*$ is also continuous, $H^i (G, \rho) = 0$ for $i=1,...,n-1$.
\end{theorem}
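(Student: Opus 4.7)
The plan is to adapt the Garland--Ballmann--\'{S}wiatkowski spectral method for cohomology vanishing to Banach coefficients, following the strategy of \cite{OppVanBan}. The first step is to invoke Theorem \ref{group cohomology using simplicial complex thm}: since $G$ acts properly and cocompactly on the contractible, locally finite Bruhat--Tits building $X$, and $\rho$ is continuous, I may replace $H^i(G,\rho)$ by the simplicial cohomology $H^i(X,\rho)$ of $\rho$-twisted antisymmetric $V$-valued cochains on $X$, and try to show the latter vanishes whenever $1\le i\le n-1$.

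The second step is a local-to-global reduction. I would define on $C^\bullet(X,\rho)$ an averaging/Laplacian-type operator $\Delta_i$ obtained by integrating over the compact stabilizers of $i$-faces, and decompose $\Delta_i$ as a positive combination of operators $\Delta_i^\tau$ supported on the links $X_\tau$ of $(i-1)$-simplices. The cocycle condition implies that any $\phi\in\ker d_i$ restricts to a cocycle on every link $X_\tau$, so a uniform spectral bound of the form $\|\Delta_i^\tau\|\le 1-\epsilon$ on link cohomology, valid for every $(i-1)$-face $\tau$ and every $V\in\mathcal{E}$, translates into a lower bound on $\Delta_i$. A Hodge-type argument then forces $\ker d_i\subseteq\im d_{i-1}$; continuity of $\rho^*$ enters here, so that the duality pairings with $V^*$ needed to close the Hodge argument produce continuous $G$-invariant maps.

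The third and most substantive step is the local spectral bound. Each link $X_\tau$ is a finite spherical building of rank $\le n-1$ whose thickness grows with $q$. For scalar coefficients, Feit--Higman--Garland-type estimates give that the scalar link-Laplacian on $L^2(X_\tau)$ has operator norm bounded by $1-c(n)+O(q^{-1/2})$, which lifts automatically to $V$-valued cochains when $V$ is Hilbert. For $V\in\mathcal{E}'=\mathcal{E}_3(\mathcal{E}_2(\mathcal{E}_1(r,C_1),\theta_2),C_3)$ I would transfer the scalar bound through the three layers of deformation in turn: complex interpolation against a Hilbert space (this is where the $\theta_2$-Hilbertian ingredient supplies the decisive balance $\|{\cdot}\|^{\theta_2}\|{\cdot}\|^{1-\theta_2}$) and the remaining type/cotype-style perturbations indexed by $r,C_1,C_3$, each costing only a controlled multiplicative constant. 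Finally, Lemma \ref{L2 norm stability} propagates the resulting inequality through quotients, subspaces, $\ell_2$-sums and ultraproducts, so the uniform bound holds on all of $\mathcal{E}=\overline{\mathcal{E}'}$.

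The main obstacle, which determines $Q$, is the quantitative matching between the Hilbert-space gain and the cumulative Banach-space loss. Combining the three previous paragraphs one aims at an inequality of the form
\[
\|\Delta_i^\tau\otimes\id_V\|\le 1-c(n)+F(r,C_1,\theta_2,C_3)\,q^{-1/2},
\]
and sets $Q=Q(r,C_1,\theta_2,C_3,n)$ equal to the value of $q$ above which $F\cdot q^{-1/2}<c(n)$. The hardest technical point is controlling $F$ uniformly across the whole of $\mathcal{E}$ -- in particular verifying that the interpolation and closure operations in the construction of $\mathcal{E}$ preserve the estimate with bounds depending only on $r,C_1,\theta_2,C_3$, not on the individual Banach space. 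The restriction $i\le n-1$ is intrinsic to the method, since at top degree $i=n$ there is no link of an $n$-simplex that could supply a local spectral estimate.
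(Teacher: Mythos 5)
First, note what the paper itself does here: Theorem \ref{main thm of VanBan} is not proved in this paper at all --- it is quoted from \cite{OppVanBan}, and the text only records the mechanism of that proof: a family of operators $T_\tau \in B(L^2(G_\tau))$ indexed by the $(n-2)$-faces $\tau$ of the fundamental chamber (with $G_\tau$ the compact stabilizer), such that the single hypothesis $\Vert T_\tau \otimes \id_V \Vert_{B(L^2(G_\tau;V))} \le \varepsilon_0$ for all $\tau$ already forces $H^i(G,\rho)=0$ for all $i=1,\dots,n-1$, plus the verification that for $q$ large the class $\mathcal{E}'$, and then $\mathcal{E}=\overline{\mathcal{E}'}$ via \cite[Lemma 2.24]{OppVanBan} (or Lemma \ref{L2 norm stability}), satisfies that bound. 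Your third step --- Hilbert-space spectral gap of order $q^{-1/2}$ on the thick rank-2 residues, transferred through the interpolation ($\theta_2$-Hilbertian) layer and the bounded-deformation layers at the cost of controlled constants, then propagated through quotients, subspaces, $\ell_2$-sums and ultraproducts, with $Q$ fixed by comparing the Hilbert gain against the cumulative Banach loss --- is consistent in spirit with that sketch, and your use of Theorem \ref{group cohomology using simplicial complex thm} to pass to $H^i(X,\rho)$ is also legitimate.

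The genuine gap is in your second step. For Banach coefficients there is no Hodge-type argument: without an inner product there is no adjoint $d^*$ acting within $C^\bullet(X,\rho)$, no self-adjoint Laplacian, and no orthogonal complement of $\im d_{i-1}$, so the implication ``uniform link-wise bound $\Rightarrow \Ker d_i \subseteq \im d_{i-1}$,'' which is the heart of Garland/Ballmann--\'Swiatkowski in the Hilbert case, does not go through as you state it; this obstruction is precisely what \cite{OppVanBan} exists to overcome. The actual argument is of Dymara--Januszkiewicz type: one works with averaging projections over the parahoric subgroups and norm estimates for the operators $T_\tau \otimes \id_V$ on $L^2(G_\tau;V)$ (an ``angles between projections'' mechanism), the continuity of $\rho^*$ being needed to make these averaging operators and the attendant decompositions available --- not to ``close a Hodge argument via duality pairings,'' which is where your outline would break. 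Relatedly, the local data in the real proof are attached only to the codimension-2 faces of the chamber, and vanishing in \emph{all} degrees $1,\dots,n-1$ is extracted from those rank-2 estimates at once, rather than degree by degree from links of $(i-1)$-simplices as in your plan. To repair your proposal you would essentially have to reproduce the projection/angle machinery of \cite{OppVanBan}; as written, the decisive step is missing.
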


The idea behind the proof of this Theorem is as follows. One fixes an $n$-dimensional simplex $\triangle$ in $X$ and defines a family of operators $T_\tau \in B(L^2 (G_{\tau}))$, where $\tau$ runs over the $(n-2)$-faces of $\triangle$. It is shown there that there is a constant $\varepsilon_0 >0$, such that for a given Banach space $V$, if $\Vert T_\tau \otimes id_V \Vert_{B(L^2 (G_{\tau} ; V))} \leq \varepsilon_0$, then for every isometric representation $\rho$ of $G$ on $V$, such that $\rho^*$ is continuous, $H^i (G, \rho) = 0$ for $i=1,...,n-1$. The class of Banach spaces $\mathcal{E}'$ is then defined in such a way that for a large enough $q$, $\Vert T_\tau \otimes id_V \Vert \leq \varepsilon_0$ for every $V \in \mathcal{E}'$ and every $\tau$. By \cite[Lemma 2.24]{OppVanBan}, passing to the closure does not change the bounds on $\Vert T_\tau \otimes id_V \Vert$ and therefore the cohomologies also vanish for $V \in \mathcal{E} = \overline{\mathcal{E} '}$.

We observe that if in the proof of Theorem \ref{main thm of VanBan} in \cite{OppVanBan}, we use Lemma \ref{L2 norm stability} of the current paper (instead of  \cite[Lemma 2.24]{OppVanBan}), we can extend the definition of the closure of $\mathcal{E}'$ in Theorem \ref{main thm of VanBan}. Define $\widetilde{\mathcal{E}} = \widetilde{\mathcal{E}'}$ to be the smallest class of Banach spaces that contains $\mathcal{E}'$, such that $\widetilde{\mathcal{E}}$ is closed under quotients, subspaces, $l_2$-sums, ultraproducts, \textbf{and}  such that for every finite measure space $(\Omega, \nu)$, if $V \in \widetilde{\mathcal{E}}$, then $L^2 (\Omega ; V) \in \widetilde{\mathcal{E}}$. By Lemma \ref{L2 norm stability}, if $\Vert T_\tau \otimes id_V \Vert \leq \varepsilon_0$ for every $V \in \mathcal{E}'$ and every $\tau$, then for every $V \in \widetilde{\mathcal{E}'}$,  $\Vert T_\tau \otimes id_V \Vert \leq \varepsilon_0$ for every $\tau$ and the rest of the proof of Theorem \ref{main thm of VanBan} is verbatim as in \cite{OppVanBan}. This yields the following:
\begin{theorem}
\label{extended main thm of VanBan}
For every choice of constants $r > 20$, $C_1 \geq 1$, $1 \geq \theta_2 >0, C_3 \geq 1$, there is a constant $Q = Q (r,C_1, \theta_2, C_3, n)$ such that if $q>Q$, then for every $V \in \widetilde{\mathcal{E}} = \widetilde{\mathcal{E}'}$ and every continuous isometric representation $\rho$ of $G$ on $V$ such that $\rho^*$ is also continuous, $H^i (G, \rho) = 0$ for $i=1,...,n-1$.
\end{theorem}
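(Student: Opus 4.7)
The plan is to follow the proof of Theorem \ref{main thm of VanBan} in \cite{OppVanBan} essentially verbatim, replacing the one invocation of \cite[Lemma 2.24]{OppVanBan} by the stronger Lemma \ref{L2 norm stability} of the present paper. The key observation is that the only property of the class $\mathcal{E}'$ used in the cohomological vanishing argument is a uniform bound on the tensored operator norms $\|T_\tau \otimes \id_V\|$ for a fixed finite family of operators $T_\tau$, and this bound is stable not only under the four operations used to form $\overline{\mathcal{E}'}$ but also under the additional operation $V \mapsto L^2(\Omega;V)$.

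First I recall the structure of the proof in \cite{OppVanBan}. One fixes a top-dimensional simplex $\triangle$ in the Bruhat-Tits building $X$, and for each $(n-2)$-face $\tau$ of $\triangle$ one defines an averaging-type operator $T_\tau$ acting on $L^2(G_\tau)$, where $G_\tau$ is the compact stabiliser of $\tau$ in $G$. There is an absolute constant $\varepsilon_0 > 0$, depending only on the combinatorial data of $X$, such that whenever
\[
\|T_\tau \otimes \id_V\|_{B(L^2(G_\tau;V))} \leq \varepsilon_0 \quad \text{for every $(n-2)$-face $\tau$ of $\triangle$,}
\]
one may conclude $H^i(G,\rho) = 0$ for $i = 1,\dots,n-1$ for every continuous isometric representation $\rho$ of $G$ on $V$ whose dual $\rho^*$ is continuous. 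The class $\mathcal{E}'$ is engineered so that, for $q > Q(r,C_1,\theta_2,C_3,n)$ large enough, this uniform bound indeed holds for every $V \in \mathcal{E}'$.

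The next step is to extend this uniform bound from $\mathcal{E}'$ to the larger class $\widetilde{\mathcal{E}'}$. For each $\tau$, set $\mathcal{B}(\tau) = \{V : \|T_\tau \otimes \id_V\| \leq \varepsilon_0\}$. By \cite[Lemma 2.24]{OppVanBan}, $\mathcal{B}(\tau)$ is closed under quotients, subspaces, $\ell_2$-sums and ultraproducts; this is exactly what is needed to extend the bound from $\mathcal{E}'$ to $\overline{\mathcal{E}'}$ in the original proof. Lemma \ref{L2 norm stability} of the present paper adds precisely the missing fifth closure property: for any finite measure space $\Lambda$, the implication $V \in \mathcal{B}(\tau) \Rightarrow L^2(\Lambda;V) \in \mathcal{B}(\tau)$ holds. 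Intersecting over the finite collection of $(n-2)$-faces of $\triangle$, the class $\bigcap_\tau \mathcal{B}(\tau)$ is closed under all five operations defining $\widetilde{\mathcal{E}'}$; since it contains $\mathcal{E}'$, it must contain $\widetilde{\mathcal{E}'}$.

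Having established $\|T_\tau \otimes \id_V\| \leq \varepsilon_0$ for every $V \in \widetilde{\mathcal{E}'}$ and every $(n-2)$-face $\tau$, the remaining portion of the proof of Theorem \ref{main thm of VanBan} in \cite{OppVanBan} applies word-for-word and delivers the desired vanishing $H^i(G,\rho) = 0$ for $i = 1,\dots,n-1$. I do not expect a genuine obstacle in any of the steps: the real content of the upgrade is the single new closure property furnished by Lemma \ref{L2 norm stability}, and the remaining work is purely formal bookkeeping, made easy by the fact that only finitely many operators $T_\tau$ are involved and each of them is handled independently with the same constant $\varepsilon_0$.
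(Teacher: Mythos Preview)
Your proposal is correct and follows essentially the same approach as the paper: the paper's proof is precisely the observation that replacing \cite[Lemma 2.24]{OppVanBan} by Lemma \ref{L2 norm stability} supplies the additional closure property $V \mapsto L^2(\Omega;V)$ for the class $\mathcal{B}(\varepsilon_0)$, so the bound $\|T_\tau \otimes \id_V\| \leq \varepsilon_0$ extends from $\mathcal{E}'$ to $\widetilde{\mathcal{E}'}$ and the rest of \cite{OppVanBan} applies verbatim. Your version is slightly more explicit in defining the classes $\mathcal{B}(\tau)$ and intersecting over the finitely many $(n-2)$-faces, but this is just a cosmetic unpacking of the same argument.
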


Next, we fix some constants $r', C_1', C_3'$ such that $r' > 20$, $C_1' \geq 1$, $ C_3' \geq 1$ (for example, we can take $r'=21, C_1' = C_3' = 1$). Given constants $1 < p_1 \leq 2 \leq p_2 < \infty$, we denote $\theta_{p_1,p_2} = \min \lbrace 2 - \frac{2}{p_1}, \frac{2}{p_2} \rbrace$.  With these notations, we define $\mathcal{E} (p_1,p_2)$ to be the the class of Banach spaces $\mathcal{E} (p_1,p_2) = \mathcal{E}_3 (\mathcal{E}_2 (\mathcal{E}_1 (r',C_1'),\theta_{p_1,p_2}),C_3')$. We will not repeat the definitions of the $\mathcal{E}_1, \mathcal{E}_2, \mathcal{E}_3$ here, but only recall that for such a choice, $\mathcal{E} (p_1,p_2)$ contains all $\theta_{p_1,p_2}$-strictly Hilbertian spaces (see \cite[\S 1.1.2]{OppVanBan}). In particular, for every $p_1 \leq p \leq p_2$, $(M_{k} (\mathbb{C}), \Vert . \Vert_p) \in \mathcal{E} (p_1,p_2)$. With this notation, we can prove Theorem \ref{Vanishing of cohomoloogy thm}:
\begin{theorem}
Let $G= \mathbb{G} (K)$ be a simple, $K$-rank $n$, $K$-algebraic group over a non-archimedean local field $K$ with a residue field of order $q$ and $\Gamma < G$ a cocompact lattice. For any $1 < p_1 \leq 2 \leq p_2 < \infty$, there exists a natural number $Q=Q(p_1,p_2,n)$ such that if $q > Q$, then $H^i  (\Gamma, V) =0$ for every $i=1,...,n-1$ and every Banach space of the form $V = \prod_{l \rightarrow \mathcal{U}} (M_{k_l} (\mathbb{C}), \Vert . \Vert_p)$ where $\mathcal{U}$ is any ultrafilter on $\mathbb{N}$ and $p_1 \leq p \leq p_2$.
\end{theorem}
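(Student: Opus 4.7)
The plan is to reduce the vanishing on $\Gamma$ to the vanishing on $G$ via Shapiro's Lemma, and then invoke Theorem \ref{extended main thm of VanBan}. Fix any isometric representation $\pi$ of $\Gamma$ on $V = \prod_{l \to \mathcal{U}}(M_{k_l}(\mathbb{C}),\Vert\cdot\Vert_p)$. The Bruhat--Tits building on which $G$ acts, together with the cocompact lattice $\Gamma$, satisfy the framework of \cref{Framework subsec}, so Theorem \ref{L^2 Shapiro's lemma} yields
$$H^i(\Gamma,\pi) \;=\; H^i\bigl(G,\lambda_{\Ind_\Gamma^G(\pi)_{L^2}}\bigr).$$
It then suffices to verify the two hypotheses of Theorem \ref{extended main thm of VanBan} for the induced representation: membership of the underlying Banach space in $\widetilde{\mathcal{E}}(p_1,p_2)$, and continuity of the dual representation.

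For membership, each factor $(M_{k_l}(\mathbb{C}),\Vert\cdot\Vert_p)$ is a non-commutative $L^p$-space, hence strictly $\theta_p$-Hilbertian with $\theta_p = \tfrac{2}{p}$ for $p \geq 2$ and $\theta_p = 2 - \tfrac{2}{p}$ for $p \leq 2$; in either case $\theta_p \geq \theta_{p_1,p_2}$, so by the remark preceding the theorem it lies in $\mathcal{E}(p_1,p_2) \subseteq \widetilde{\mathcal{E}}(p_1,p_2)$. Closure of $\widetilde{\mathcal{E}}(p_1,p_2)$ under ultraproducts then places $V$ itself in $\widetilde{\mathcal{E}}(p_1,p_2)$. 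Next, choosing a fundamental domain $D \subset G$ for the $\Gamma$-action, cocompactness of $\Gamma$ in $G$ gives $\mu(D) < \infty$, and the underlying Banach space of $\Ind_\Gamma^G(\pi)_{L^2}$ is, via the restriction map $f \mapsto f|_D$, isometric to $L^2(D;V)$. The closure property built into the definition of $\widetilde{\mathcal{E}}(p_1,p_2)$ under $V \mapsto L^2(\Omega;V)$ for finite measure spaces $\Omega$ then gives $L^2(D;V) \in \widetilde{\mathcal{E}}(p_1,p_2)$.

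For continuity of the dual representation, I would pass through the Asplund property: every strictly $\theta$-Hilbertian space with $\theta > 0$ is uniformly convex with modulus of convexity depending only on $\theta$ (via Pisier's Clarkson-type inequality), and this modulus is preserved under ultraproducts, so $V$ is uniformly convex, hence reflexive, hence Asplund. Theorem \ref{L^2 of Asplund is Asplund} upgrades this to Asplundness of $L^2(D;V) \cong \Ind_\Gamma^G(\pi)_{L^2}$, and Theorem \ref{Asplund implies continuous dual rep} then gives continuity of the dual of $\lambda_{\Ind_\Gamma^G(\pi)_{L^2}}$. Applying Theorem \ref{extended main thm of VanBan} with a constant $Q = Q(r',C_1',\theta_{p_1,p_2},C_3',n)$ depending only on $p_1,p_2,n$ (the auxiliary parameters $r',C_1',C_3'$ being fixed once and for all) concludes $H^i\bigl(G,\lambda_{\Ind_\Gamma^G(\pi)_{L^2}}\bigr) = 0$ for $1 \leq i \leq n-1$. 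The point where care is most needed is precisely the identification $\Ind_\Gamma^G(\pi)_{L^2} \cong L^2(D;V)$: this is exactly why the class $\widetilde{\mathcal{E}}$ had to be enlarged relative to $\mathcal{E}$ of \cite{OppVanBan} to be closed under vector-valued $L^2$ with finite measure, and it is also what forces one to work with the $L^2$-version of Shapiro's Lemma developed in \cref{Shapiro's lemma sec} rather than the classical one.
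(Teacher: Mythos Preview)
Your proof is correct and follows essentially the same route as the paper: reduce from $\Gamma$ to $G$ via the $L^2$-Shapiro Lemma, verify that $\Ind_\Gamma^G(\pi)_{L^2}\cong L^2(D;V)$ lies in $\widetilde{\mathcal{E}}(p_1,p_2)$ using closure under ultraproducts and vector-valued $L^2$, establish the Asplund property via uniform convexity and reflexivity to get continuity of the dual, and then invoke Theorem~\ref{extended main thm of VanBan}. The only cosmetic difference is that the paper cites \cite[Corollary 5.3]{PX} for uniform convexity of $(M_k(\mathbb{C}),\Vert\cdot\Vert_p)$ directly, whereas you deduce it from the strictly $\theta$-Hilbertian structure via Pisier's Clarkson-type inequality; both are valid.
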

 
\begin{proof}
For $1 < p_1 \leq 2 \leq p_2 < \infty$, applying Theorem \ref{extended main thm of VanBan} on $\mathcal{E} (p_1, p_2)$, there is a constant $Q = Q (p_1, p_2, n)$ such that if $q > Q$, then for every $V \in \widetilde{\mathcal{E} (p_1, p_2)}$ and every isometric representation $\rho$ of $G$ on $V$, if $\rho^*$ is continuous, then $H^i (G, \rho) = 0$ for $i=1,...,n-1$.

As noted above, for every $k \in \mathbb{N}$ and every $p_1 \leq p \leq p_2$, $(M_{k} (\mathbb{C}), \Vert . \Vert_p) \in \mathcal{E} (p_1,p_2)$. Therefore for any choice of ultrafilter $\mathcal{U}$ on $\mathbb{N}$, 
$V = \prod_{l \rightarrow \mathcal{U}} (M_{k_l} (\mathbb{C}), \Vert . \Vert_p)$ is in $\widetilde{\mathcal{E} (p_1,p_2)}$. By \cite[Corollary 5.3]{PX}, every space $(M_{k} (\mathbb{C}), \Vert . \Vert_p)$ is uniformly convex and the bound on the modulus of convexity depends only on $p$. As a result, $V = \prod_{l \rightarrow \mathcal{U}} (M_{k_l} (\mathbb{C}), \Vert . \Vert_p)$ is uniformly convex and, by Milman-Pettis theorem (see for instance \cite[Theorem 5.2.15]{Megginson}), $V$ is reflexive and thus Asplund. 

Given an isomeric representation $\pi$ of $\Gamma$ on $V$, $V'=\Ind_{\Gamma}^G (\pi)_{L^2}$ is isometrically isomorphic to the Banach space $L^2 (D, \mu ; V)$ where $D$ is a fundamental domain of $G/ \Gamma$. Thus, $V' \in \widetilde{\mathcal{E} (p_1,p_2)}$ and by Theorem \ref{L^2 of Asplund is Asplund}, $V'$ is Asplund. The induced representation $\lambda_{\Ind_{\Gamma}^G (\pi)_{L^2}}$ is a continuous isometric representation on $V'$ (which is Asplund) and therefore by Theorem \ref{Asplund implies continuous dual rep}, $\lambda_{\Ind_{\Gamma}^G (\pi)_{L^2}}^*$ is continuous and the conditions of Theorem \ref{extended main thm of VanBan} hold. As a result, $H^i (G, \lambda_{\Ind_{\Gamma}^G (\pi)_{L^2}}) = 0$ for $i=1,...,n-1$, and by Theorem \ref{L^2 Shapiro's lemma} (which is our version of Shapiro's Lemma) it follows that $H^i (\Gamma, \pi) = 0$ for every $i=1,...,n-1$.
\end{proof}

\bibliographystyle{plain}
\bibliography{bibl}

\end{document}